  \def\SvZfontcode{8} 
  \def\SvZslantedGreekCapitals{1}
\def\SvZrequireslantedRedef{0}
\def\SvZrequireslantedRedef{1}
\def\SvZrequireslantedRedef{1}
\def\SvZrequireslantedRedef{1}
\DeclareMathAlphabet{\bm}{OT1}{ptm}{b}{it} 
\def\SvZrequireslantedRedef{1}
\def\SvZrequireslantedRedef{1}
\renewcommand{\Gamma}{\varGamma}
\renewcommand{\Delta}{\varDelta}
\renewcommand{\Theta}{\varTheta}
\renewcommand{\Lambda}{\varLambda}
\renewcommand{\Xi}{\varXi}
\renewcommand{\Pi}{\varPi}
\renewcommand{\Sigma}{\varSigma}
\renewcommand{\Upsilon}{\varUpsilon}
\renewcommand{\Phi}{\varPhi}
\renewcommand{\Psi}{\varPsi}
\renewcommand{\Omega}{\varOmega}
\renewcommand{\phi}{\varphi}
\newcommand{\mathds}{\mathbb}
\DeclareMathOperator{\rank}{rk}
\newcommand{\N}{\mathds{N}}
\newcommand{\field}{\mathds{F}}
\DeclareMathOperator{\GF}{GF}
\newcommand{\ignore}[1]{}
\DeclareMathOperator{\closure}{cl}
\DeclareMathOperator{\PG}{PG} 
\let\Oldsetminus\setminus
\renewcommand{\setminus}{\ensuremath{-}}
\newcommand{\delete}{\ensuremath{\!\Oldsetminus}}
\newcommand{\contract}{\ensuremath{\!/}}
\newtheorem{theorem}{Theorem}[section]
\newtheorem{theorem}{Theorem}
\newtheorem{lemma}[theorem]{Lemma}
\newtheorem{claim}{Claim}[theorem] 
\newtheorem{conjecture}[theorem]{Conjecture}
\newenvironment{claimenv}{\list{}{\rightmargin0pt\leftmargin10pt\topsep0pt}\item[]}{\endlist}
\newenvironment{subproof}{\begin{claimenv}\begin{proof}}{\end{proof}\end{claimenv}}
\theoremstyle{definition}
\newtheorem{definition}[theorem]{Definition}
\newtheorem{problem}[theorem]{Problem}
\DeclareMathOperator{\localconn}{\sqcap}
\newcommand{\colocalconn}{\localconn^*}
\newcommand{\coclosure}{\closure^*}
\newcommand{\smin}{-}
\begin{document}
\title{Intertwining connectivities in representable matroids\footnote{Research supported by the NWO (The Netherlands Organization for Scientific Research) free competition project ``Matroid Structure -- for Efficiency'' led by Bert Gerards, and by the National Science Foundation, Grant No. 1161650.}}
\author{Tony Huynh\footnote{Department of Mathematics, Simon Fraser University, Burnaby, B.C., Canada. Email: \url{tony.bourbaki@gmail.com}} \and Stefan H. M. van Zwam \footnote{Department of Mathematics, Princeton University, Princeton, NJ, United States. Email: \url{svanzwam@math.princeton.edu}}}

\maketitle

\abstract{
  Let $M$ be a representable matroid, and $Q, R, S, T$ subsets of the ground set such that the smallest separation that separates $Q$ from $R$ has order $k$, and the smallest separation that separates $S$ from $T$ has order $l$. We prove that, if $M$ is sufficiently large, then there is an element $e$ such that in one of $M\delete e$ and $M\contract e$ both connectivities are preserved. 
  
  For matroids representable over a finite field we prove a stronger result: we show that we can remove $e$ such that both a connectivity and a minor of $M$ are preserved.
}

\section{Introduction} 
\label{sec:introduction}
For a matroid $M$ on ground set $E$ we define, as usual, the \emph{connectivity function} $\lambda_M$ by $\lambda_M(X) := \rank_M(X) +\rank_M(E\smin X) - \rank(M)$. For disjoint sets $S,T\subseteq E$, the \emph{connectivity between $S$ and $T$} is
\begin{align}
    \kappa_M(S,T) := \min\{\lambda_M(X) : S \subseteq X \subseteq E\smin T\}.
\end{align}

Geelen, in private communication, conjectured the following.

\begin{conjecture}\label{con:mainres}
    There exists a function $c: \N^2 \to \N$ with the following property. Let $M$ be a matroid, and let $Q, R, S, T \subseteq E(M)$ be sets of elements such that $Q\cap R = S\cap T = \emptyset$. Let $k := \kappa_M(Q,R)$ and $l := \kappa_M(S,T)$. If $|E(M)\smin (Q\cup R\cup S \cup T)| \geq c(k,l)$, then there exists an element $e\in E(M)\smin (Q\cup R\cup S \cup T)$ such that one of the following holds:
    \begin{enumerate}
        \item $\kappa_{M\delete e}(Q, R) = k$ and $\kappa_{M\delete e}(S, T) = l$;
        \item $\kappa_{M\contract e}(Q, R) = k$ and $\kappa_{M\contract e}(S, T) = l$.
    \end{enumerate}
\end{conjecture}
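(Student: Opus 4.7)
The plan is to start from the witnesses given by Tutte's Linking Theorem, quantify how freely they can be modified, and then close the residual gap with a crossing argument on the tight separations. Apply the theorem twice to pick minors $N_1 = M\contract C_1 \delete D_1$ on $Q\cup R$ with $\lambda_{N_1}(Q)=k$, and $N_2 = M\contract C_2\delete D_2$ on $S\cup T$ with $\lambda_{N_2}(S)=l$, where $(C_1,D_1)$ and $(C_2,D_2)$ partition $E(M)\smin(Q\cup R)$ and $E(M)\smin(S\cup T)$ respectively. Set $E^* := E(M) \smin (Q\cup R\cup S\cup T)$. If $C_1\cap C_2\cap E^* \ne \emptyset$, then any element $e$ in that intersection has both $N_1$ and $N_2$ as minors of $M\contract e$, which forces $\kappa_{M\contract e}(Q,R)\geq k$ and $\kappa_{M\contract e}(S,T)\geq l$; equality follows from the standard monotonicity of $\kappa$ under minor operations. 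Symmetrically, if $D_1\cap D_2\cap E^* \ne \emptyset$, deletion works. So the only obstruction is that, no matter how we pick witnesses, $E^*$ partitions into $X := C_1\cap D_2\cap E^*$ and $Y := D_1\cap C_2\cap E^*$.

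Next I would observe that the contract/delete labels are not intrinsic: call $e$ \emph{$(Q,R)$-contractible} if $\kappa_{M\contract e}(Q,R)=k$ and \emph{$(Q,R)$-deletable} if $\kappa_{M\delete e}(Q,R)=k$, and analogously for $(S,T)$. A routine application of Tutte's theorem to $M\contract e$ or $M\delete e$ produces a witness in which $e$ lies in $C_1$ or in $D_1$, respectively. Hence if any $e\in X$ were also $(Q,R)$-deletable, we could re-pick $N_1$ to place $e$ in $D_1'$ and finish via deletion; symmetrically if any $e\in X$ were $(S,T)$-contractible. So we may assume every $e\in X$ is \emph{strictly} $(Q,R)$-contractible and strictly $(S,T)$-deletable, with a symmetric statement for $Y$. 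Unpacking via the standard rank formulae for $\lambda_{M\contract e}$ and $\lambda_{M\delete e}$: strictly $(Q,R)$-contractible means there is a tight $k$-separation $(A,B)$ of $M$ with $Q\subseteq A$, $R\cup\{e\}\subseteq B$ and $e\notin\closure_M(B\smin e)$, that is, $e$ is a coloop of $M\,|\,B$; strictly $(S,T)$-deletable means there is a tight $l$-separation $(A',B')$ with $S\cup\{e\}\subseteq A'$, $T\subseteq B'$ and $e\in\closure_M(B')$, that is, $e$ lies in the guts of that separation.

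What remains, and what is the genuine content of the conjecture, is to bound $|X|+|Y|$ by some function of $k$ and $l$. The approach would be to fix canonical tight separations: among all tight $k$-separations $(A,B)$ of $M$ separating $Q$ from $R$, pick one with $A$ inclusion-minimal — well-defined since the family of such $A$-sides is closed under intersection by submodularity — and analogously a canonical tight $l$-separation $(A',B')$ for $(S,T)$. For $e\in X$, crossing the two via $\lambda_M(A\cap A')+\lambda_M(A\cup A')\leq k+l$ yields tight $(k+l)$-separations of $M$ separating $Q\cup S$ from $R\cup T$; the coloop certificate on the $k$-side together with the guts certificate on the $l$-side pin $e$ into a substructure of this crossed separation whose rank is bounded in terms of $k+l$. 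Counting coloops of a rank-bounded restriction and guts elements of a bounded-order separation then gives $|X|,|Y|\leq f(k,l)$, yielding a suitable $c(k,l)$.

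The main obstacle is exactly this counting step for arbitrary matroids. A single tight $k$-separation $(A,B)$ admits up to $r_M(B)$ coloops of $M\,|\,B$, which is not bounded by $k$ alone, so one cannot attack the bound one tight separation at a time; one really has to exploit the simultaneous coloop-and-guts certificate forced by the $(Q,R)$- and $(S,T)$-sides together, and doing so requires a good structural description of how tight separations of different orders interact. A full proof would presumably go through a flower-style decomposition in the spirit of Geelen, Gerards, and Whittle, or through tangle methods. For matroids representable over a finite field these structures are rigid enough for a clean bound — which is presumably why the paper reserves its sharper statement for that case — whereas for arbitrary matroids the absence of such rigidity is exactly what leaves Conjecture~\ref{con:mainres} open in the stated generality.
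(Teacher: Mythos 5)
Your opening reduction is fine and is essentially the standard starting point: using Tutte's Linking Theorem you may assume that every element of $E^\ast := E(M)\smin(Q\cup R\cup S\cup T)$ is non-flexible with respect to $(Q,R)$ and non-flexible in the complementary direction with respect to $(S,T)$. But from that point on the proposal does not prove the statement, and you say so yourself: bounding $|X|+|Y|$ is ``the genuine content of the conjecture,'' and the counting scheme you sketch for it would fail. A single canonical inclusion-minimal tight separation does not confine the non-flexible elements: the paper's Theorem \ref{thm:mengerfragile} shows that they are displayed by an arbitrarily long \emph{nested chain} of tight separations, one per element, and they need not lie in any substructure whose rank is bounded in terms of $k+l$. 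Likewise, ``guts elements of a bounded-order separation'' cannot be counted by $f(k,l)$: already the guts of an order-$2$ separation can be a line carrying arbitrarily many points, and coloops of $M|B$ are only bounded by $\rank_M(B)$, which you correctly note is unbounded. So the crossing-and-counting step is not just unproven but structurally the wrong tool, and the argument has a genuine gap exactly where the work has to happen.

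You also misread what the paper achieves, which matters for the comparison. The finite-field theorem (preserving a minor, with an essential dependence on $q$) is a separate, stronger statement; Conjecture \ref{con:mainres} itself is proved in Section \ref{sec:mainres} for \emph{all} representable matroids---in fact for any intersection-closed class---by a mechanism absent from your sketch: induction on $k+l$. One fixes a tight $Q$--$R$ separation $(A,B)$ with many elements of $F$ on the $B$-side (via the nested chain of Theorem \ref{thm:mengerfragile}), removes the elements of $A\smin(S_1\cup T_1)$ while keeping $\kappa(S,T)=l$, and then, when the traces $S_1:=A\cap S$ and $T_1:=A\cap T$ are not skew, uses the intersection property to extend by a new point $s\in\closure(S_1)\cap\closure(T_1)$; contracting $s$ drops $\kappa(S,T)$ to $l-1$ (and in one subcase $\kappa(Q,R)$ to $k-1$), so induction applies to the untouched elements of $F$ in $B$, and the chosen element is then lifted back to $M$; a separate case handles $\localconn(S_1,T_1)=0$. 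This extension trick is exactly what fails for the V\'amos matroid, which is why the paper proves the conjecture only for intersection-closed (e.g.\ representable) classes and why it remains open in general---so your final assessment of the general case is right, but your attribution of the difficulty to ``finite-field rigidity'' and your proposed route to a bound are not.
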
    

In other words, for fixed $Q, R, S, T$, there is a finite number of minor-minimal matroids with the prescribed connectivities. This formulation is reminiscent of the definition of an \emph{intertwine}, which is a minor-minimal matroid containing two prescribed \emph{minors}. For that reason we speak of the intertwining of connectivities.

For graphs the result follows readily from Robertson and Seymour's Graph Minors Theorem \cite{RSXX}. In this paper we prove the conjecture for all representable matroids.

For matroids representable over a finite field we prove a stronger result:
\begin{theorem}\label{thm:intertwinewithminor}
        There exists a function $c:\N^3 \to \N$ with the following property.
    Let $q$ be a prime power, let $M$ be a $\GF(q)$-representable matroid, let $N$ be a minor of $M$, let $S,T \subseteq E(M)$ be disjoint, and let $k := \kappa_M(S, T)$. If $|E(M)\smin (S\cup T\cup E(N))| > c(q,|E(N)|,k)$, then there exists an element $e \in E(M)\smin (S\cup T\cup E(N))$ such that at least one of the following holds:
    \begin{enumerate}
        \item $\kappa_{M\delete e}(S, T) = k$ and $N$ is a minor of $M\delete e$;
        \item $\kappa_{M\contract e}(S, T) = k$ and $N$ is a minor of $M\contract e$.
    \end{enumerate}
\end{theorem}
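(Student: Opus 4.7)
The plan is to proceed by contradiction. Assume that no element $e \in E(M) \setminus (S \cup T \cup E(N))$ satisfies the conclusion; I then aim to bound $|E(M) \setminus (S \cup T \cup E(N))|$ by a function $c(q, |E(N)|, k)$, contradicting the hypothesis.

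For each candidate $e$, I introduce four flags: $\alpha_\delete(e)$ and $\alpha_\contract(e)$ for whether $\kappa(S,T)=k$ is preserved under deletion and contraction of $e$, and $\beta_\delete(e)$ and $\beta_\contract(e)$ for whether $N$ survives as a minor under each operation. The element $e$ satisfies the conclusion exactly when $(\alpha_\delete \wedge \beta_\delete)(e)$ or $(\alpha_\contract \wedge \beta_\contract)(e)$ holds, so assuming the contrary, each candidate falls into one of four ``bad'' types according to which pair of flags vanishes: the pure types $(\beta_\delete{=}0, \beta_\contract{=}0)$ and $(\alpha_\delete{=}0, \alpha_\contract{=}0)$, and the two mixed types $(\alpha_\delete{=}0, \beta_\contract{=}0)$ and $(\beta_\delete{=}0, \alpha_\contract{=}0)$.

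The two pure types are the easier ones. The all-$\beta$ type consists of \emph{$N$-essential} elements; over $\GF(q)$, I would bound this set by a function of $q$ and $|E(N)|$ via a stabilizer/freedom argument in the spirit of Geelen--Gerards--Whittle, leveraging that an $N$-minor of $M$ corresponds to a labelled submatrix of a $\GF(q)$-representation and that two such sub-representations that agree on a spanning clone of $N$ agree globally up to row operations. The all-$\alpha$ type consists of elements essential for the connectivity $\kappa_M(S,T) = k$; by Tutte's linking theorem there is a minor witnessing this connectivity on a bounded-size ground set, and a standard argument forces every connectivity-essential element to lie inside such a witness, yielding a bound in $k$ alone.

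The main obstacle is the two mixed types, where a deletion failure of one kind pairs with a contraction failure of the other and the individual bounds above do not directly apply. To handle these, I would construct, from a $\GF(q)$-representation of $M$, a single minor $M'$ of $M$ of size at most $g(q,|E(N)|,k)$ that simultaneously realises the $N$-minor and a Tutte-linking witness of $\kappa_M(S,T) = k$; any element outside $E(M')$ would then have, for one of the two operations, matching $\alpha$- and $\beta$-flags equal to $1$, and hence could not be a mixed-type obstruction. Harmonising the contractions and deletions required for the two witnesses is delicate: the operations that exhibit the $N$-minor need not be compatible with those producing a Tutte-linking witness of $\kappa(S,T) = k$, and bringing them together appears to require passing through intermediate minors and exploiting that there are only finitely many single-element $\GF(q)$-extensions of a matroid of bounded rank. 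I expect this to be the technical core of the proof; combining the three bounds then yields $c(q,|E(N)|,k) := f_1(q,|E(N)|) + f_2(k) + g(q,|E(N)|,k)$ and closes the contradiction.
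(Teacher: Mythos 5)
There is a genuine gap. Your reduction to the four ``types'' is sound but partly misdirected: the two pure types are in fact empty, not merely ``easier''. By Tutte's Linking Theorem, every $e\notin S\cup T$ satisfies $\kappa_{M\delete e}(S,T)=k$ or $\kappa_{M\contract e}(S,T)=k$, so no element is connectivity-essential; and since $N=M\contract C_N\delete D_N$, every $e\notin E(N)$ lies in $C_N$ or $D_N$, so $N$ survives in at least one of $M\delete e$, $M\contract e$. The stabilizer/freedom argument and the ``connectivity-essential elements lie inside a witness'' argument you sketch are therefore unnecessary (and, as stated, the latter would not work anyway, since there is no reason an element essential for $\kappa_M(S,T)$ in a large matroid would have to lie in any particular bounded-size linking witness). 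Thus the entire content of the theorem is your ``mixed type'': elements that are, say, contractible for the connectivity but must be deleted to keep the $N$-minor. For these, your plan --- find a single bounded-size minor $M'$ simultaneously witnessing $N$ and the linking, so that every element outside $E(M')$ has compatible flags --- is essentially a restatement of the theorem, and the paragraph acknowledging that ``harmonising the contractions and deletions \ldots is delicate'' is exactly where a proof is required but not supplied. Appealing to the finiteness of single-element $\GF(q)$-extensions of bounded-rank matroids does not by itself produce such an $M'$.

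The two ideas you are missing are the ones the paper uses. First, the conflicted elements are all non-flexible with respect to $(S,T)$, and Theorem \ref{thm:mengerfragile} orders them along a long nested chain $A_1\subseteq\cdots\subseteq A_t$ of $S-T$-separating sets of order $k+1$; this linear structure is what makes a pigeonhole argument possible. Second, to make the connectivity witness and the $N$-witness compatible across a separation $(A_i,B_i)$, one glues a copy of $\PG(k-1,q)$ into the guts $\langle A_i\rangle\cap\langle B_i\rangle$ and records, for each $i$, the $\GF(q)$-\emph{represented} minor $N_i$ obtained from $B_i$ together with this projective geometry after performing the $N$-reduction there. These $N_i$ live on a ground set and rank bounded in terms of $n$, $k$ and $q$, so two indices $i<j$ give identical matrices $H_i=H_j$; Lemmas \ref{lem:contractinthemiddle} and \ref{lem:contractguts} then show that contracting $C\cap(A_j\smin A_i)$ and deleting $D\cap(A_j\smin A_i)$ preserves both the guts geometry (hence $\kappa(S,T)=k$) and the $N$-minor, yielding the contradiction. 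Without the chain of nested separations and the projective-geometry-in-the-guts device (or some substitute for them), the ``technical core'' you defer remains unproved.
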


By repeated use of this theorem, it is possible to bound the size of an intertwine of any fixed number of connectivities. This gives a (highly unsatisfying) answer to the following problem:
\begin{problem}\label{prob:gammoid}
    Let $M = (S, \mathcal{I})$ be a matroid that is a gammoid. Give an upper bound, in terms of $|S|$, on the size of the graph needed to represent $M$ as a gammoid.
\end{problem}
Good upper bounds can potentially be useful in the study of parametrized complexity (c.f. \cite{Mar09}).

Our proof technique for Theorem \ref{thm:intertwinewithminor} has been used previously in, for instance, \cite{GGW02,GHW05,Kral07}. For graphs it dates back at least to the work of Robertson and Seymour on graph minors (cf. \cite{RSXXI}). In fact, Theorem \ref{thm:intertwinewithminor} is a generalization of \cite[Theorem 1.1]{GHW05} and \cite[Theorem 13.3]{TruIII}.

Theorem \ref{thm:intertwinewithminor} becomes false when the dependence on $q$ is removed. A counterexample is readily obtained from a construction of arbitrarily long blocking sequences in \cite[Proposition 6.1]{GHW05}. It follows that different techniques are needed to prove Conjecture \ref{con:mainres}.  

Our proof of Conjecture \ref{con:mainres} for representable matroids uses a different approach, based on a suggestion by Geelen (private communication). Unfortunately, the proof uses a property of representable matroids that does not hold for general matroids.

The paper is organized as follows. In Section \ref{sec:conn} we fix some terminology and state some easy lemmas. Section \ref{sec:menger} contains results related to Tutte's Linking Theorem. The main result in that section shows that, if Conjecture \ref{con:mainres} is false, there exist matroids with arbitrarily long sequences of nested separations. In Section \ref{sec:finitefield} we prove Theorem \ref{thm:intertwinewithminor}, and in Section \ref{sec:mainres} we prove Conjecture \ref{con:mainres} for all representable matroids.

\section{Preliminaries}\label{sec:conn}
We will use the following elementary observation (cf. \cite{OSW10a,GW10}):
\begin{lemma}\label{lem:closurecomplement}
    Let $M$ be a matroid and let $(A,\{e\}, B)$ be a partition of $E(M)$. Then $e \in \closure_M(A)$ if and only if $e\not\in\coclosure_M(B)$.
\end{lemma}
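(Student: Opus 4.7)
My plan is to reduce the statement to a short rank-function calculation using the standard formula for the rank of the dual matroid. Recall that for any matroid $M$ and any $X\subseteq E(M)$,
\[
    \rank_{M^*}(X) = |X| + \rank_M(E(M)\smin X) - \rank(M),
\]
and that by definition $\coclosure_M(B) = \closure_{M^*}(B)$, so $e\in\coclosure_M(B)$ iff $\rank_{M^*}(B\cup\{e\}) = \rank_{M^*}(B)$.

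Applying the dual rank formula to $B$ and to $B\cup\{e\}$ (whose complements in $E(M)$ are $A\cup\{e\}$ and $A$, respectively), I get
\[
    \rank_{M^*}(B) = |B| + \rank_M(A\cup\{e\}) - \rank(M),
\]
\[
    \rank_{M^*}(B\cup\{e\}) = |B| + 1 + \rank_M(A) - \rank(M).
\]
Subtracting, the condition $e\in\coclosure_M(B)$ becomes $\rank_M(A\cup\{e\}) = \rank_M(A) + 1$, which is exactly the statement that $e\notin\closure_M(A)$. Conversely, $e\in\closure_M(A)$ means $\rank_M(A\cup\{e\})=\rank_M(A)$, which by the same computation makes $\rank_{M^*}(B\cup\{e\}) = \rank_{M^*}(B)+1$, so $e\notin\coclosure_M(B)$. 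This proves both directions.

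There is no real obstacle here; the only thing to be careful about is the bookkeeping of which set is the complement of which, and making sure the $+1$ ends up on the correct side. Since $\{e\}$, $A$, $B$ partition $E(M)$, the complement of $B\cup\{e\}$ is precisely $A$, which is what makes the two dual rank expressions differ by exactly the quantity $1 - (\rank_M(A\cup\{e\}) - \rank_M(A)) \in \{0,1\}$, giving the clean biconditional. The entire proof is a couple of lines.
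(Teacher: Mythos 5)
Your computation is correct, and it is the standard argument for this fact; the paper itself states the lemma without proof (citing it as an elementary observation), and your dual-rank calculation is exactly the proof one would supply.
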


It is well-known that the connectivity function is submodular:
\begin{lemma}\label{lem:submod}
    Let $M$ be a matroid, and let $X,Y\subseteq E(M)$. Then
    \begin{align*}
        \lambda_M(X) + \lambda_M(Y) \geq \lambda_M(X\cap Y) + \lambda_M(X\cup Y).
    \end{align*}
\end{lemma}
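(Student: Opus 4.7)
The plan is to derive submodularity of the connectivity function directly from submodularity of the matroid rank function, which is one of the defining properties of a matroid. Recall that for any matroid $M$ and any $A, B \subseteq E(M)$ we have $\rank_M(A) + \rank_M(B) \geq \rank_M(A \cap B) + \rank_M(A \cup B)$.

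First I would expand both sides using the definition $\lambda_M(X) = \rank_M(X) + \rank_M(E \smin X) - \rank(M)$. The left side becomes
\[
\rank_M(X) + \rank_M(E\smin X) + \rank_M(Y) + \rank_M(E\smin Y) - 2\rank(M),
\]
and the right side becomes
\[
\rank_M(X\cap Y) + \rank_M(E\smin(X\cap Y)) + \rank_M(X\cup Y) + \rank_M(E\smin(X\cup Y)) - 2\rank(M).
\]
The $-2\rank(M)$ terms cancel, so the inequality reduces to a pure statement about rank.

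Next, I would split the remaining inequality into two applications of submodularity of the rank function. One application gives
\[
\rank_M(X) + \rank_M(Y) \geq \rank_M(X\cap Y) + \rank_M(X\cup Y),
\]
and the other, applied to the complements $E\smin X$ and $E\smin Y$, gives
\[
\rank_M(E\smin X) + \rank_M(E\smin Y) \geq \rank_M((E\smin X)\cap (E\smin Y)) + \rank_M((E\smin X)\cup (E\smin Y)).
\]
Using De Morgan's laws, the right-hand side of the second inequality equals $\rank_M(E\smin(X\cup Y)) + \rank_M(E\smin(X\cap Y))$. Adding the two inequalities yields exactly the required estimate.

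There is no real obstacle here: the lemma is a routine consequence of rank submodularity, and the only bookkeeping to watch is the De Morgan substitution for the complements. I would therefore present the proof as a short two-line chain of inequalities following the expansion above.
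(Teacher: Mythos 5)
Your proof is correct. The paper states this lemma without proof (as a well-known fact), and your argument is the standard derivation: expand $\lambda_M$ via its definition, cancel the $-2\rank(M)$ terms, and apply rank submodularity twice --- once to $X,Y$ and once to their complements, with De Morgan's laws identifying the resulting sets.
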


The following lemmas are easily verified:
\begin{lemma}\label{lem:lambdamonotone}
    Let $M$ be a matroid, let $X\subseteq E(M)$, and let $N$ be a minor of $M$ with $X \subseteq E(N)$. Then $\lambda_N(X) \leq \lambda_M(X)$.
\end{lemma}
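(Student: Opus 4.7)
The plan is to reduce to a single deletion (or contraction) of one element outside $X$, and then handle that case by a direct rank calculation.

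First, I would write any minor as $N = M \delete D \contract C$ with $C, D$ disjoint subsets of $E(M) \setminus E(N)$, and proceed by induction on $|E(M) \setminus E(N)|$. The inductive step reduces the claim to the case where $N$ is obtained from $M$ by deleting a single element $e \notin X$ or contracting a single element $e \notin X$. Since $\lambda_{M^*}(X) = \lambda_M(X)$ for any matroid $M$, and $(M \contract e)^* = M^* \delete e$, the contraction case follows from the deletion case applied to $M^*$. Thus it suffices to prove the inequality when $N = M \delete e$ for some $e \in E(M) \setminus X$.

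In that case I would unpack the definitions. We have $\rank_N(X) = \rank_M(X)$, $\rank_N(E(N) \setminus X) = \rank_M(E(M) \setminus (X \cup \{e\}))$, and $\rank(N) = \rank(M \delete e)$, so
\begin{align*}
  \lambda_M(X) - \lambda_N(X) = \bigl[\rank_M(E(M) \setminus X) - \rank_M(E(M) \setminus (X \cup \{e\}))\bigr] - \bigl[\rank(M) - \rank(M \delete e)\bigr].
\end{align*}
Each bracketed difference lies in $\{0,1\}$, so the only way this could be negative is if the first bracket is $0$ and the second is $1$. But the second bracket equals $1$ precisely when $e$ is a coloop of $M$, i.e.\ $e \notin \closure_M(E(M) \setminus \{e\})$; in that situation $e$ cannot lie in the closure of any subset of $E(M) \setminus \{e\}$ either, so $e \notin \closure_M(E(M) \setminus (X \cup \{e\}))$ and the first bracket is also $1$. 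Hence $\lambda_M(X) - \lambda_N(X) \geq 0$.

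No step here is really a serious obstacle; the only mild subtlety is remembering that when $e \notin X$, deletion does not alter $\rank_M(X)$, so the change in $\lambda$ is entirely controlled by what happens on the complementary side, and the coloop case is exactly where the inequality is tight. The rest is straightforward bookkeeping.
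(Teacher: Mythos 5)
Your proof is correct. The paper states this lemma without proof (``easily verified''), and your argument is a complete verification: the reduction to a single deletion via self-duality of $\lambda$ and induction is standard, and the final step --- that the first bracketed increment is at least the second --- is exactly local submodularity of the rank function ($\rank_M(A\cup e)-\rank_M(A)\geq\rank_M(B\cup e)-\rank_M(B)$ for $A\subseteq B$), which your coloop argument establishes correctly in the only case that matters.
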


\begin{lemma}\label{lem:kappamonotone}
    Let $M$ be a matroid, let $S,T$ be disjoint subsets of $E(M)$, and let $N$ be a minor of $M$ with $S\cup T \subseteq E(N)$. Then $\kappa_N(S,T) \leq \kappa_M(S,T)$.
\end{lemma}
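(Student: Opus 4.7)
The plan is to produce, from any witness $X$ for $\kappa_M(S,T)$, a valid test set in $N$ of no larger connectivity. Set $X' := X \cap E(N)$. Since $S \subseteq E(N)$ and $S \subseteq X$, we have $S \subseteq X'$; and since $T \cap X = \emptyset$, we have $X' \subseteq E(N) \smin T$. So $X'$ is eligible in the minimization defining $\kappa_N(S,T)$, and it remains to show $\lambda_N(X') \leq \lambda_M(X) = \kappa_M(S,T)$.

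I will prove this last inequality by induction on $|E(M) \smin E(N)|$. The base case is trivial. For the inductive step, pick any element $e \in E(M) \smin E(N)$. Then $N$ is a minor of $M' := M \delete e$ or $M' := M \contract e$. By induction applied to $M'$ and $N$, we have $\lambda_N(X \cap E(N)) \leq \lambda_{M'}(X \cap E(M'))$, so it suffices to handle the single-element step, i.e.\ to show $\lambda_{M'}(X \cap E(M')) \leq \lambda_M(X)$.

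If $e \notin X$ then $X \cap E(M') = X$, and Lemma~\ref{lem:lambdamonotone} gives exactly what we need. If $e \in X$, I would compute directly. For deletion: writing out the definition of $\lambda$ yields
\[
\lambda_{M\delete e}(X \smin e) - \lambda_M(X) = [\rank_M(X \smin e) - \rank_M(X)] + [\rank(M) - \rank(M \delete e)],
\]
and a short check of the two cases (whether $e$ is a coloop of $M$ or not) shows both cases give a nonpositive sum. The contraction case is analogous using $\rank_{M/e}(Y) = \rank_M(Y \cup e) - \rank_M(e)$, and gives
\[
\lambda_{M\contract e}(X \smin e) - \lambda_M(X) = [\rank_M(E(M)\smin X \cup e) - \rank_M(E(M)\smin X)] - \rank_M(e),
\]
again nonpositive by cases on whether $e$ is a loop. (One could alternatively derive the contraction case from the deletion case using $\lambda_{M^*} = \lambda_M$.)

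The only mild obstacle is the case $e \in X$, which is not covered by Lemma~\ref{lem:lambdamonotone} directly, since restricting $X$ to $E(N)$ can a priori increase $\lambda_M$; the rank computation above is precisely what compensates by using the drop in $\rank(M')$ relative to $\rank(M)$. Everything else is bookkeeping.
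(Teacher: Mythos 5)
Your proof is correct. The paper offers no proof of this lemma (it is dismissed as ``easily verified''), and your argument is the standard verification: restrict the optimal $S$--$T$-separating set $X$ to $E(N)$ and check, one element at a time, that $\lambda_{M\delete e}(X\smin e)\leq\lambda_M(X)$ and $\lambda_{M\contract e}(X\smin e)\leq\lambda_M(X)$; your case analysis (the coloop case forces $\rank_M(X\smin e)=\rank_M(X)-1$, the loop case is immediate) is exactly right.
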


We introduce some terminology.
\begin{definition}\label{def:convenient}
    Let $M$ be a matroid and let $S,T$ be disjoint subsets of $E(M)$. A partition $(A,B)$ of $E(M)$ is \emph{$S-T$-separating of order $k+1$} if $S\subseteq A$, $T\subseteq B$, and $\lambda_M(A) = k$. If $B$ is implicit, we also say that $A$ is \emph{$S-T$-separating} of order $k+1$. 
\end{definition}
If, moreover, $|A|, |B| \geq k+1$ then $(A,B)$ is an (exact) \emph{$(k+1)$-separation} of $M$. Sometimes we will be sloppy and say that $(A,B)$ is $S-T$ separating if $S\subseteq B$ and $T\subseteq A$.

\begin{lemma}\label{lem:modular}
    Let $M$ be a matroid, let $S,T\subseteq E(M)$ be disjoint subsets, and let $k := \kappa_M(S,T)$. If $(A_1, B_1)$ and $(A_2, B_2)$ are $S-T$-separating with $\lambda_M(A_1) = \lambda_M(A_2) = k$, then $(A_1\cap A_2, B_1\cup B_2)$ is $S-T$-separating of order $k+1$.
\end{lemma}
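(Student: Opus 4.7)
The plan is to apply submodularity of $\lambda_M$ (Lemma \ref{lem:submod}) to $A_1$ and $A_2$, and then argue that the minimality of $k := \kappa_M(S,T)$ forces equality throughout.

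First I would verify that both $A_1 \cap A_2$ and $A_1 \cup A_2$ are themselves $S$-$T$-separating. Since $S \subseteq A_1$ and $S \subseteq A_2$, we have $S \subseteq A_1 \cap A_2 \subseteq A_1 \cup A_2$. Likewise, since $T \subseteq B_1$ and $T \subseteq B_2$, we have $T \subseteq B_1 \cap B_2 = E(M)\smin (A_1 \cup A_2)$, and so also $T \subseteq E(M)\smin (A_1 \cap A_2)$. By the definition of $\kappa_M(S,T)$, any set containing $S$ and disjoint from $T$ has connectivity at least $k$; therefore
\begin{align*}
    \lambda_M(A_1 \cap A_2) \geq k \quad\text{and}\quad \lambda_M(A_1 \cup A_2) \geq k.
\end{align*}

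Next I would apply Lemma \ref{lem:submod} to obtain
\begin{align*}
    2k = \lambda_M(A_1) + \lambda_M(A_2) \geq \lambda_M(A_1 \cap A_2) + \lambda_M(A_1 \cup A_2) \geq 2k.
\end{align*}
Equality must therefore hold throughout, which yields $\lambda_M(A_1 \cap A_2) = k$. Combined with $S \subseteq A_1 \cap A_2$ and $T \subseteq E(M)\smin(A_1\cap A_2)$, this is exactly the statement that $(A_1 \cap A_2, B_1\cup B_2)$ is $S$-$T$-separating of order $k+1$ in the sense of Definition \ref{def:convenient}.

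There is no real obstacle here: the argument is a textbook uncrossing of two minimum separations via submodularity, and the only thing to be careful about is matching the "order $k+1$" convention of the definition (which wants $\lambda = k$) rather than accidentally proving "order $k$".
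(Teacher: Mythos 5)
Your proof is correct and follows essentially the same route as the paper's: both verify that $A_1\cap A_2$ and $A_1\cup A_2$ are $S$-$T$-separating, use the minimality in the definition of $\kappa_M(S,T)$ to bound each below by $k$, and then invoke submodularity (Lemma \ref{lem:submod}) to force equality. No gaps.
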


\begin{proof}
    Clearly, $(A_1\cap A_2, B_1\cup B_2)$ and $(A_1 \cup A_2, B_1 \cap B_2)$ are $S-T$-separating. Since $\kappa_M(S,T) = k$, we must have $\lambda_M(A_1\cap A_2) \geq k$ and $\lambda_M(A_1\cup A_2) \geq k$. It follows from Lemma \ref{lem:submod} that equality must hold.
\end{proof}

Finally, we will frequently use the following well-known result and its dual.
\begin{lemma}\label{lem:contractclosure}
    Let $M$ be a matroid, let $S,T \subseteq E(M)$ be disjoint subsets, let $k := \kappa_M(S,T)$, and let $e\in E(M)\smin (S\cup T)$. A partition $(A,B)$ of $E(M)\smin e$ is $S-T$-separating of order $k$ in $M\contract e$ if and only if $(A\cup e, B)$ is $S-T$-separating of order $k+1$ in $M$ with $e\in\closure_M(A)\cap\closure_M(B)$.
\end{lemma}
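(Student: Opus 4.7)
The plan is to derive the equivalence from a direct rank calculation, using the minimality of $k = \kappa_M(S,T)$ to force the two closure conditions on $e$. The case where $e$ is a loop of $M$ is trivial (then $M\contract e = M\delete e$, $\rank_M(X\cup e) = \rank_M(X)$ for every $X$, and both closure conditions hold automatically), so I would assume $e$ is not a loop. For any $X \subseteq E(M)\smin\{e\}$ one has $\rank_{M\contract e}(X) = \rank_M(X\cup e) - 1$, which upon substituting into the definition of $\lambda_{M\contract e}$ yields the key identity
\begin{align*}
\lambda_{M\contract e}(A) \;=\; \lambda_M(A\cup e) \;+\; \bigl(\rank_M(B\cup e) - \rank_M(B)\bigr) \;-\; 1.
\end{align*}

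The backward direction is immediate from this identity: if $\lambda_M(A\cup e) = k$ and $e\in\closure_M(B)$, then $\rank_M(B\cup e) = \rank_M(B)$, so $\lambda_{M\contract e}(A) = k-1$, meaning $(A,B)$ is $S$-$T$-separating of order $k$ in $M\contract e$. (The hypothesis $e\in\closure_M(A)$ is not even used here.)

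For the forward direction, suppose $\lambda_{M\contract e}(A) = k-1$. Since $(A\cup e, B)$ is $S$-$T$-separating in $M$, we have $\lambda_M(A\cup e) \ge k$, and the correction term $\rank_M(B\cup e) - \rank_M(B)$ lies in $\{0,1\}$; the identity then forces both $\lambda_M(A\cup e) = k$ and $e\in\closure_M(B)$. To recover $e\in\closure_M(A)$, I would apply the minimality of $k$ to the other $S$-$T$-separating partition $(A, B\cup e)$: expanding $\lambda_M(A) = \rank_M(A) + \rank_M(B\cup e) - \rank(M)$ and using $\rank_M(B\cup e) = \rank_M(B)$, one obtains $\lambda_M(A) = \lambda_M(A\cup e) - (\rank_M(A\cup e) - \rank_M(A)) = k - (\rank_M(A\cup e) - \rank_M(A))$, and combining with $\lambda_M(A)\ge k$ forces $\rank_M(A\cup e) = \rank_M(A)$, i.e.\ $e\in\closure_M(A)$. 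I do not anticipate any serious obstacle; the only delicate point is to order the two appeals to $\lambda\ge k$ so that the closure established on one side can be substituted in to extract the closure on the other.
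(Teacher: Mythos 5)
The paper states this lemma without proof (it is labelled ``well-known''), so there is nothing to compare against; your rank computation is the standard argument, and for non-loop $e$ it is complete and correct. The identity $\lambda_{M\contract e}(A)=\lambda_M(A\cup e)+(\rank_M(B\cup e)-\rank_M(B))-1$ is right, the backward direction follows immediately, and your ordering of the two appeals to $\kappa_M(S,T)=k$ in the forward direction (first extract $e\in\closure_M(B)$ from the identity, then feed that into $\lambda_M(A)\ge k$ to get $e\in\closure_M(A)$) works exactly as you describe.

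The one genuine flaw is your dismissal of the loop case as ``trivial'' with both sides holding automatically. If $e$ is a loop then indeed $e\in\closure_M(A)\cap\closure_M(B)$ always, and $\rank_M(X\cup e)=\rank_M(X)$ for all $X$; but then $\lambda_{M\contract e}(A)=\rank_M(A)+\rank_M(B)-\rank(M)=\lambda_M(A\cup e)$, \emph{not} $\lambda_M(A\cup e)-1$. So taking any minimal $S$-$T$-separation $(A\cup e,B)$ with $\lambda_M(A\cup e)=k$, the right-hand condition of the lemma holds while the left-hand one fails: the ``if'' direction is actually \emph{false} for loops. This is a defect of the lemma as stated rather than of your method, but a careful write-up should say so explicitly --- either add the hypothesis that $e$ is not a loop (which is what your computation $\rank_{M\contract e}(X)=\rank_M(X\cup e)-1$ silently assumes), or note that in every application the element $e$ being contracted is non-contractible in a situation where a loop would behave differently. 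Declaring the case ``trivial'' while asserting conclusions that in fact contradict the claimed equivalence is the only real gap.
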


\section{Tutte's Linking Theorem}\label{sec:menger}

In \cite{Tut65a}, Tutte proved the following result, which can be seen to be a generalization of Menger's theorem to matroids (see \cite[Section 8.5]{ox2}):

\begin{theorem}\label{thm:tuttelinkminmax}
    Let $M$ be a matroid and let $S, T$ be disjoint subsets of $E(M)$. Then
    \begin{align}
        \kappa_M(S,T) = \max \{ \lambda_N(S) : N \textrm{ minor of } M \textrm{\ such that\ } E(N) = S\cup T\}.
    \end{align}
\end{theorem}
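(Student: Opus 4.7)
The direction $\kappa_M(S,T) \geq \lambda_N(S)$ is immediate from Lemma \ref{lem:kappamonotone}: if $E(N) = S \cup T$, then $(S,T)$ is the only $S$-$T$-separating partition of $E(N)$, so $\kappa_N(S,T) = \lambda_N(S)$, and Lemma \ref{lem:kappamonotone} yields $\lambda_N(S) \leq \kappa_M(S,T)$.

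For the reverse inequality I plan to induct on $|E(M) \smin (S \cup T)|$. The base case $E(M) = S \cup T$ takes $N = M$. For the inductive step, fix an arbitrary $e \in E(M) \smin (S \cup T)$; it will suffice to show that $\kappa_{M \delete e}(S,T) = k$ or $\kappa_{M \contract e}(S,T) = k$, where $k := \kappa_M(S,T)$, since induction applied to the smaller matroid then produces the required minor $N$ with $E(N) = S \cup T$ and $\lambda_N(S) \geq k$. The loop and coloop cases are trivial, so assume $e$ is neither.

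Suppose for contradiction that both $\kappa_{M \delete e}(S,T)$ and $\kappa_{M \contract e}(S,T)$ are strictly less than $k$, and choose minimum $S$-$T$-separating partitions $(A_1, B_1)$ of $E(M) \smin e$ in $M \delete e$ and $(A_2, B_2)$ of $E(M) \smin e$ in $M \contract e$. A short rank computation comparing $\lambda_{M \delete e}$ with $\lambda_M$ forces $\lambda_M(A_1) = k$ and $e \notin \closure_M(A_1) \cup \closure_M(B_1)$; the analogous computation for $M \contract e$ yields $\lambda_M(A_2) = k$ and $e \in \closure_M(A_2) \cap \closure_M(B_2)$. Applying Lemma \ref{lem:submod} to $A_1$ and $A_2$, and using that both $A_1 \cap A_2$ and $A_1 \cup A_2$ are $S$-$T$-separating (so each has $\lambda_M \geq k$), forces the equality $\lambda_M(A_1 \cup A_2) = k$.

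The contradiction now comes from the partition $(A_1 \cup A_2 \cup e, B_1 \cap B_2)$, which is itself $S$-$T$-separating and hence has $\lambda_M \geq k$. Since $e \in \closure_M(A_2) \subseteq \closure_M(A_1 \cup A_2)$, adjoining $e$ to $A_1 \cup A_2$ leaves the left rank unchanged; since $B_1 \cap B_2 \subseteq B_1$ and $e \notin \closure_M(B_1)$, we also have $e \notin \closure_M(B_1 \cap B_2)$, so removing $e$ from $(B_1 \cap B_2) \cup e$ drops the right rank by one. A one-line computation then gives $\lambda_M(A_1 \cup A_2 \cup e) = \lambda_M(A_1 \cup A_2) - 1 = k - 1$, the desired contradiction. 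The only delicate ingredient in the whole argument is the closure bookkeeping that extracts $e \notin \closure_M(A_1) \cup \closure_M(B_1)$ and $e \in \closure_M(A_2) \cap \closure_M(B_2)$ from the chosen minimum separators; once this is in place, the final rank calculation is routine.
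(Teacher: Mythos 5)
Your argument is correct. The paper does not prove Theorem \ref{thm:tuttelinkminmax} at all --- it is quoted from Tutte \cite{Tut65a} (see also \cite[Section 8.5]{ox2}) --- so there is no proof to compare against; what you have written is essentially the standard proof. Your two directions are sound: the inequality $\geq$ is exactly Lemma \ref{lem:kappamonotone} applied to a minor on $S\cup T$, and the reverse inequality reduces, by induction on $|E(M)\smin(S\cup T)|$, to the ``local'' statement that for every non-loop, non-coloop $e$ one of $M\delete e$, $M\contract e$ preserves $\kappa_M(S,T)$ (the paper's Theorem \ref{thm:tuttelink}). The closure bookkeeping you describe checks out: a witness $(A_1,B_1)$ for $\kappa_{M\delete e}<k$ forces $\lambda_M(A_1)=k$ and $e\notin\closure_M(A_1)\cup\closure_M(B_1)$, a witness $(A_2,B_2)$ for $\kappa_{M\contract e}<k$ forces $\lambda_M(A_2)=k$ and $e\in\closure_M(A_2)\cap\closure_M(B_2)$ (this is Lemma \ref{lem:contractclosure}), submodularity pins $\lambda_M(A_1\cup A_2)=k$, and then $\lambda_M(A_1\cup A_2\cup e)=k-1$ contradicts $\kappa_M(S,T)=k$. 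One cosmetic remark: the minimality of the chosen separators plays no role in your argument, so you could drop it.
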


Equivalently,

\begin{theorem}\label{thm:tuttelink}
    Let $M$ be a matroid and let $S,T$ be disjoint subsets of $E(M)$. For each $e \in E(M)\smin (S \cup T)$, at least one of the following holds:
    \begin{enumerate}
        \item $\kappa_{M\delete e}(S,T) = \kappa_M(S,T)$, or
        \item $\kappa_{M\contract e}(S,T) = \kappa_M(S,T)$.
    \end{enumerate}
\end{theorem}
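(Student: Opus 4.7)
The plan is to derive Theorem \ref{thm:tuttelink} directly from Theorem \ref{thm:tuttelinkminmax}, since the word ``Equivalently'' suggests the two formulations are interchangeable. Applying Theorem \ref{thm:tuttelinkminmax} yields a minor $N$ of $M$ with $E(N) = S \cup T$ such that $\lambda_N(S) = \kappa_M(S,T)$. Observe that since $E(N) = S \cup T$ and $S, T$ are disjoint, the only $S$--$T$ separating partition of $E(N)$ is $(S, T)$, so $\kappa_N(S,T) = \lambda_N(S) = \kappa_M(S,T)$.

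Now fix $e \in E(M) \smin (S \cup T)$. Since $e \notin E(N)$, in the minor sequence from $M$ to $N$ the element $e$ is either deleted or contracted. Suppose first that $e$ is deleted. Then $N$ is a minor of $M \delete e$, so by Lemma \ref{lem:kappamonotone},
\begin{align*}
    \kappa_{M\delete e}(S,T) \;\geq\; \kappa_N(S,T) \;=\; \kappa_M(S,T).
\end{align*}
On the other hand, $M\delete e$ is itself a minor of $M$ containing $S \cup T$, so Lemma \ref{lem:kappamonotone} gives the reverse inequality $\kappa_{M\delete e}(S,T) \leq \kappa_M(S,T)$. Equality holds, and (\textit{i}) is satisfied. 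If instead $e$ is contracted in forming $N$, the identical argument with $M\contract e$ in place of $M\delete e$ yields (\textit{ii}).

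There is no real obstacle here: once Theorem \ref{thm:tuttelinkminmax} is in hand, the argument is a short two-line deduction, using only the monotonicity of $\kappa$ under taking minors and the trivial observation that an element outside $S \cup T$ is removed from $M$ in one of the two possible ways when passing to a minor whose ground set is exactly $S \cup T$. The only subtlety worth flagging explicitly is the identity $\kappa_N(S,T) = \lambda_N(S)$ when $E(N) = S \cup T$, which ensures that the witness minor from Theorem \ref{thm:tuttelinkminmax} certifies the full value of $\kappa$ and not merely a lower bound on it.
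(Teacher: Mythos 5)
Your derivation is correct and is exactly the standard deduction the paper has in mind: the paper states Theorem \ref{thm:tuttelink} as an immediate equivalent reformulation of Theorem \ref{thm:tuttelinkminmax} and gives no proof, and your argument (take the witness minor $N$ with $E(N)=S\cup T$, note $e$ is either deleted or contracted in forming it, and sandwich $\kappa$ using Lemma \ref{lem:kappamonotone}) fills in that omitted step correctly.
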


\begin{definition}\label{def:Mengerfragile}
    Let $M$ be a matroid, let $S,T$ be disjoint subsets of $E(M)$, and let $e \in E(M)\smin (S\cup T)$. 
    \begin{enumerate}
        \item If $\kappa_{M\delete e}(S,T) = \kappa_M(S,T)$ then we say $e$ is \emph{deletable with respect to $(S,T)$}.
        \item If $\kappa_{M\contract e}(S,T) = \kappa_M(S,T)$ then we say $e$ is \emph{contractible with respect to $(S,T)$}.
        \item If $e$ is both deletable and contractible then we say $e$ is \emph{flexible with respect to $(S,T)$}.
    \end{enumerate}
\end{definition}
We may omit the phrase ``with respect to $(S,T)$'' if it can be deduced from the context. We will mainly be concerned with non-flexible elements. The following theorem is the main result of this section:

\begin{theorem}\label{thm:mengerfragile}
    Let $M$ be a matroid, let $S,T$ be disjoint subsets of $E(M)$, let $k := \kappa_M(S,T)$, and let $F\subseteq E(M)\smin (S\cup T)$ be a set of non-flexible elements. There exist an ordering $(f_1, f_2, \ldots, f_t)$ of $F$ and a sequence $(A_1, A_2, \ldots, A_t)$ of subsets of $E(M)$, such that
    \begin{enumerate}
        \item $A_i$ is $S-T$-separating of order $k+1$ for each $i \in \{1,\ldots,t\}$;
        \item $A_{i} \subseteq A_{i+1}$ for each $i\in \{1,\ldots, t-1\}$;
        \item $A_i \cap F = \{f_1, \dots, f_i\}$ for each $i \in \{1,\ldots,t\}$;
        \item $f_i \in \closure_M(A_i\smin f_i) \cap \closure_M(E(M)\smin A_i)$ or $f_i \in \coclosure_M(A_i\smin f_i) \cap \coclosure_M(E(M)\smin A_i)$.
    \end{enumerate}
\end{theorem}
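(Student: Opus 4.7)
The plan is to build the ordering and nested chain greedily, using the lattice of minimum $S$-$T$-separating sets from Lemma \ref{lem:modular}. For $f \in F$, let $A^*_f$ be the intersection of all minimum $S$-$T$-separating sets containing $f$; iterated use of Lemma \ref{lem:modular} makes $A^*_f$ itself a minimum separating set, hence the smallest one containing $f$. By Theorem \ref{thm:tuttelink} every non-flexible $f$ is either deletable but not contractible, or contractible but not deletable. Using Lemma \ref{lem:contractclosure} and its dual, deletability of $f$ forces $f \in \closure_M(A \setminus f)$ for \emph{every} minimum separating $A \ni f$, and dually for contractibility; non-contractibility produces some minimum separating witness $A'$ with $f \in \closure_M(E(M) \setminus A')$, and monotonicity of $\closure_M$ together with $A^*_f \subseteq A'$ upgrades this to $f \in \closure_M(E(M) \setminus A^*_f)$, establishing the first alternative of \textit{(iv)} at $A^*_f$. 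Dually, the second alternative holds at $A^*_f$ when $f$ is contractible but not deletable.

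A short computation with
\begin{align*}
  \lambda_M(A \setminus f) - \lambda_M(A) = [f \notin \closure_M(E(M) \setminus A)] - [f \notin \closure_M(A \setminus f)],
\end{align*}
applied at $A = A^*_f$, shows $\lambda_M(A^*_f \setminus f) = \lambda_M(A^*_f) = k$ in either case, so $A^*_f \setminus f$ is itself a minimum separating set. This yields the ``no linking'' property: if $f_1 \neq f_2$ in $F$ satisfied $A^*_{f_1} = A^*_{f_2} =: A^*$, then $A^* \setminus f_1$ would be a minimum separating set strictly inside $A^* = A^*_{f_2}$ still containing $f_2$, contradicting the minimality of $A^*_{f_2}$. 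Hence $f' \preceq f$ iff $f' \in A^*_f$ (equivalently $A^*_{f'} \subseteq A^*_f$) is a partial order on $F$; fix any linear extension $(f_1, \ldots, f_t)$.

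Set $A_i := \bigcup_{j \leq i} A^*_{f_j}$; this is a minimum separating set by iterated Lemma \ref{lem:modular}, with $A_i \subseteq A_{i+1}$ by construction. Every $f' \in F \cap A_i$ lies in some $A^*_{f_j}$ with $j \leq i$, so $f' \preceq f_j$ and the linear extension forces $f' \in \{f_1, \ldots, f_i\}$; in particular $f_i \notin A_{i-1}$ and $A_i \cap F = \{f_1, \ldots, f_i\}$. Now $A_i \setminus f_i = A_{i-1} \cup (A^*_{f_i} \setminus f_i)$ is a union of two minimum separating sets, hence itself minimum separating by Lemma \ref{lem:modular}. Applying the identity above at $A = A_i$ with $\lambda_M(A_i) = \lambda_M(A_i \setminus f_i) = k$: if $f_i$ is deletable, deletability gives $f_i \in \closure_M(A_i \setminus f_i)$ and the identity then forces $f_i \in \closure_M(E(M) \setminus A_i)$, which is the first alternative of \textit{(iv)}; if $f_i$ is contractible, contractibility gives $f_i \notin \closure_M(E(M) \setminus A_i)$ and the identity then forces $f_i \notin \closure_M(A_i \setminus f_i)$, and two applications of Lemma \ref{lem:closurecomplement} rewrite both memberships in coclosure form, giving the second alternative.

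The main technical obstacle is establishing that $A^*_f \setminus f$ is itself a minimum separating set: once that is in hand, the no-linking argument and the closure bookkeeping at the enlarged $A_i$ follow cleanly. The conceptual pitfall is to insist on $A_i = A^*_{f_i}$, which fails to nest when $\preceq$ is not total; the enlargement $\bigcup_{j \leq i} A^*_{f_j}$ works precisely because its removal of $f_i$ is still a minimum separating set.
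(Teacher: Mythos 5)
Your argument is correct, and it is organized quite differently from the paper's. The paper proceeds by induction on $|F|$: it picks the element $f$ whose minimal $S-T$-separating set $A_f$ has smallest cardinality, shows $A_f\cap F=\{f\}$ via Lemma \ref{lem:minimalsep}, and then re-roots the problem at $A_f$ (replacing $S$ by $A_f$), using Lemma \ref{lem:growS} and its dual to guarantee that the remaining elements of $F$ stay non-flexible with respect to the new pair $(A_f,T)$. You instead build the whole chain in one pass: you take the canonical minimal separator $A^*_f$ (the intersection of all minimum separators containing $f$, well-defined because Lemma \ref{lem:contractclosure} or its dual supplies at least one such separator for every non-flexible $f$), strengthen Lemma \ref{lem:minimalsep} to the observation that $A^*_f\smin f$ is again a minimum separator, deduce that distinct elements of $F$ have distinct minimal separators, and then take cumulative unions along a linear extension of the containment order. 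This avoids Lemma \ref{lem:growS} entirely; the price is the extra step showing that condition \textit{(iv)} transfers from $A^*_{f_i}$ to the larger set $A_i$, which your Iverson-bracket identity handles correctly since $\lambda_M(A_i\smin f_i)=\lambda_M(A_i)=k$. Two small points worth making explicit in a write-up: the union form of Lemma \ref{lem:modular} (which you use for $A_i$ and for $A_{i-1}\cup(A^*_{f_i}\smin f_i)$) follows from the stated intersection form applied to the complements with the roles of $S$ and $T$ swapped; and the claim that deletability forces $f\in\closure_M(A\smin f)$ for \emph{every} minimum separating $A\ni f$ deserves a sentence of proof (if it failed, your identity together with Lemma \ref{lem:closurecomplement} would place $f$ in the coguts of $(A,E(M)\smin A)$, and the dual of Lemma \ref{lem:contractclosure} would contradict deletability). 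Your route is self-contained and yields the extra structural fact that any linear extension of the natural partial order on $F$ works; the paper's inductive set-up earns its keep because Lemma \ref{lem:growS} is reused later, e.g.\ in the proof of Theorem \ref{thm:mainres2}.
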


We will need two lemmas to prove this theorem.

\begin{lemma}\label{lem:minimalsep}
    Let $M$ be a matroid, let $S,T$ be disjoint subsets of $E(M)$, let $k := \kappa_M(S,T)$, and let $e \in E(M)\smin (S\cup T)$ be non-contractible. If $(A,B)$ is an $S-T$-separating partition of order $k+1$ such that $e\in A$ and $|A|$ is minimum, then $e\in\closure_M(A\smin e)\cap\closure_M(B)$.
\end{lemma}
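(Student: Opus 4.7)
My plan has three steps. First, I would extract from the non-contractibility of $e$ an $S$--$T$-separating partition of $E(M)$ of order $k+1$ with $e$ in the closure of both sides. Second, I would use Lemma~\ref{lem:modular} and the minimality of $|A|$ to transfer that closure information onto $(A,B)$. Third, I would obtain $e\in\closure_M(A\smin e)$ by a short rank-counting contradiction.

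For Step~1, I would start from the identity
\[
    \lambda_{M\contract e}(A')=\rank_M(A'\cup e)+\rank_M(B'\cup e)-\rank(M)-1,
\]
valid for any $S$--$T$-separating partition $(A',B')$ of $E(M)\smin e$. Since $\rank_M(B'\cup e)\ge\rank_M(B')$ and $\lambda_M(A'\cup e)\ge k$, this gives $\lambda_{M\contract e}(A')\ge k-1$; together with non-contractibility (which says $\kappa_{M\contract e}(S,T)<k$), we conclude $\kappa_{M\contract e}(S,T)=k-1$ exactly. Choosing $(A',B')$ attaining this minimum and applying Lemma~\ref{lem:contractclosure}, I obtain that $A'':=A'\cup e$ satisfies $\lambda_M(A'')=k$ and $e\in\closure_M(A')\cap\closure_M(B')$, so $(A'',B')$ is $S$--$T$-separating of order $k+1$ in $M$ and contains $e$.

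For Step~2, I would apply Lemma~\ref{lem:modular} to $(A,B)$ and $(A'',B')$: both are $S$--$T$-separating of order exactly $k+1$, so $(A\cap A'',\,B\cup B')$ is also $S$--$T$-separating of order $k+1$, and it still contains $e\in A\cap A''$. The minimality of $|A|$ among $S$--$T$-separating sets of order $k+1$ containing $e$ then forces $A=A\cap A''$, i.e.\ $A\subseteq A''$. Hence $B'\subseteq B$, and monotonicity of closure gives $e\in\closure_M(B')\subseteq\closure_M(B)$.

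For Step~3, I would suppose for contradiction that $e\notin\closure_M(A\smin e)$, so $\rank_M(A)=\rank_M(A\smin e)+1$. Combining with $\rank_M(B\cup e)=\rank_M(B)$ from $e\in\closure_M(B)$, a direct computation yields $\lambda_M(A\smin e)=\lambda_M(A)-1=k-1$. But $(A\smin e,\,B\cup e)$ is still $S$--$T$-separating (as $e\notin S\cup T$), contradicting $\kappa_M(S,T)=k$. The main obstacle is Step~1: one has to verify, via the short rank identity above, that $\kappa_{M\contract e}(S,T)$ drops by exactly one and not more, so that Lemma~\ref{lem:contractclosure} can be invoked to deliver a partition of $E(M)$ of order exactly $k+1$. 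After that, Step~2 is a standard modularity/minimality argument and Step~3 is a one-line contradiction.
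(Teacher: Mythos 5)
Your proposal is correct and follows essentially the same route as the paper: obtain from non-contractibility (via Lemma~\ref{lem:contractclosure}) a separation $(A'',B')$ of order $k+1$ with $e\in\closure_M(A''\smin e)\cap\closure_M(B')$, use Lemma~\ref{lem:modular} plus minimality of $|A|$ to conclude $A\subseteq A''$ and hence $e\in\closure_M(B)$, and finish by noting that $e\notin\closure_M(A\smin e)$ would force $\lambda_M(A\smin e)=k-1$, contradicting $\kappa_M(S,T)=k$. The only differences are cosmetic: you make explicit the check that $\kappa_{M\contract e}(S,T)$ equals $k-1$ exactly (which the paper leaves implicit), and you replace the paper's appeal to Lemma~\ref{lem:closurecomplement} in the last step by a direct rank computation.
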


\begin{proof}
    Suppose not. By Lemma \ref{lem:contractclosure}, there is an $S-T$-separating partition $(A', B')$ of order $k+1$ such that $e\in A'$ and $e \in \closure_M(A'\smin e) \cap \closure_M(B')$. By Lemma \ref{lem:modular}, $A\cap A'$ is $S-T$-separating of order $k+1$. By minimality of $A$, it then follows that $A \subseteq A'$, and therefore $B \supseteq B'$. But then $e\in \closure_M(B)$. By Lemma \ref{lem:closurecomplement}, then, $e\not\in\coclosure_M(A\smin e)$. If also $e\not\in\closure_M(A\smin e)$ then $\lambda_M(A\smin e) = k-1$, contradicting $\kappa_M(S,T)=k$. The result follows.
\end{proof}

\begin{lemma}\label{lem:growS}
    Let $M$ be a matroid, let $S,T$ be disjoint subsets of $E(M)$, let $k := \kappa_M(S,T)$, and let $U$ be an $S-T$-separating set of order $k+1$. If $e \in E(M)\smin (T\cup U)$ is non-contractible with respect to $(S,T)$, then $e$ is non-contractible with respect to $(U,T)$.
\end{lemma}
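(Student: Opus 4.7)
The plan is to enlarge a partition witnessing $\kappa_{M\contract e}(S,T)<k$ into one witnessing $\kappa_{M\contract e}(U,T)<k$. Non-contractibility with respect to $(S,T)$, combined with Lemma~\ref{lem:kappamonotone}, yields a partition $(A',B')$ of $E(M)\smin e$ with $S\subseteq A'$, $T\subseteq B'$, and $\lambda_{M\contract e}(A')\le k-1$. The natural candidate for the enlargement is $(A'\cup U,\,B'\smin U)$, which is a partition of $E(M)\smin e$ with $U$ on one side and $T$ on the other (using $e\notin U$, which is given, and $T\cap U=\emptyset$, which follows from $U$ being $S-T$-separating). Since $\kappa_M(U,T)=k$ (lower bound from $S\subseteq U$, upper bound from $\lambda_M(U)=k$), the whole problem reduces to showing $\lambda_{M\contract e}(A'\cup U)\le k-1$.

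I would apply submodularity (Lemma~\ref{lem:submod}) in $M\contract e$ to $A'$ and $U$, which reduces the task to the single inequality $\lambda_{M\contract e}(A'\cap U)\ge\lambda_{M\contract e}(U)$. The useful identity here is $\lambda_{M\contract e}(X)=\lambda_M(X)-[e\in\closure_M(X)]$, valid for $X\subseteq E(M)\smin e$, where $[\cdot]$ is the indicator. Combining this identity with $A'\cap U\subseteq U$ and monotonicity of the closure operator reduces the claim further to $\lambda_M(A'\cap U)=k$. For that, I would invoke Lemma~\ref{lem:modular} applied to the two $S-T$-separating partitions $(A'\cup e,\,B')$ and $(U,\,E(M)\smin U)$ of $M$; since $e\notin U$, the intersection produced by that lemma is exactly $A'\cap U$.

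The main obstacle, as I see it, is verifying that $(A'\cup e,\,B')$ is $S-T$-separating of order exactly $k+1$ in $M$, since Lemma~\ref{lem:modular} requires both partitions to attain $\kappa_M(S,T)$. One direction, $\lambda_M(A'\cup e)\ge k$, is automatic from $\kappa_M(S,T)=k$. The other is a short computation with the closure identity: $\lambda_{M\contract e}(A')\le k-1$ forces $e\in\closure_M(B')$, whence $\lambda_M(A'\cup e)=\lambda_{M\contract e}(A')+1\le k$. After that the remaining steps are routine submodular and closure bookkeeping.
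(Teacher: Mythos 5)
Your proposal is correct and rests on the same key step as the paper's proof: applying Lemma~\ref{lem:submod} in $M\contract e$ to the witnessing set $A'$ and to $U$. The paper phrases it as a contradiction (assuming $\kappa_{M\contract e}(U,T)=k$ to lower-bound $\lambda_{M\contract e}(A'\cup U)$ and then refuting $\lambda_{M\contract e}(A'\cap U)\le k-1$ via Lemma~\ref{lem:contractclosure}), while you run it forward by lower-bounding $\lambda_{M\contract e}(A'\cap U)$ through closure monotonicity and exhibiting $(A'\cup U, B'\smin U)$ directly; this is essentially the same argument, and in fact your detour through Lemma~\ref{lem:modular} is unnecessary since $\lambda_M(A'\cap U)\ge k$ already follows from $\kappa_M(S,T)=k$.
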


\begin{proof}
    First, observe that $\kappa_M(U,T) = k$. If the lemma is false, then there is an $S-T$-separating partition $(A,B)$ of order $k$ in $M\contract e$, yet $\kappa_{M\contract e}(U,T) = k$. In particular, $\lambda_{M\contract e}(U) = k$. By submodularity,
    \begin{align}
        2k-1 = \lambda_{M\contract e}(A) + \lambda_{M\contract e}(U) \geq \lambda_{M\contract e}(U\cap A) + \lambda_{M\contract e}(U\cup A).
    \end{align}
    Since $U\cup A$ is $U-T$-separating, we have $\lambda_{M\contract e}(U\cup A) \geq k$. Hence $\lambda_{M\contract e}(U\cap A) \leq k-1$. But $\lambda_{M}(U\cap A) = k$ since $U\cap A$ is $S-T$-separating. It follows that $e\in\closure_M(U\cap A)$, and in particular $e\in\closure_M(U)$. By Lemma \ref{lem:contractclosure}, we cannot have $e\in\closure_M(E(M)\smin (U\cup e))$. But then $\lambda_M(U\cup e) = k-1$, contradicting the fact that $\kappa_M(U,T) = k$.
\end{proof}

\begin{proof}[Proof of Theorem \ref{thm:mengerfragile}]
    We prove the result by induction on $|F|$, the case $|F| = 0$ being trivial. Suppose the result fails for a matroid $M$ with subsets $S, T, F$ as in the theorem. Let $k := \kappa_M(S,T)$ and $t := |F|$. For each $e \in F$, let $(A_e, B_e)$ be $S-T$-separating of order $k+1$ with $e \in A_e$ and $|A_e|$ as small as possible. Let $f$ be such that $|A_f| \leq |A_e|$ for all $e \in F$.
    
    \begin{claim}
        $A_f\cap F = \{f\}$.
    \end{claim}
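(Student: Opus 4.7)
The plan is to argue by contradiction: suppose there is some $e \in (A_f \cap F) \smin \{f\}$. I will produce an $S-T$-separating partition of order $k+1$ that contains $f$ but has strictly fewer elements on its $S$-side than $A_f$, contradicting the minimality built into the choice of $A_f$.

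The first step is to pin down $A_e$. Since $f$ minimizes $|A_{(\cdot)}|$ over $F$, we have $|A_f| \leq |A_e|$, and since $A_f$ is itself an $S-T$-separating set of order $k+1$ containing $e$, the choice of $A_e$ gives $|A_e| \leq |A_f|$. Now Lemma \ref{lem:modular} says $A_f \cap A_e$ is $S-T$-separating of order $k+1$; it contains $e$, so the minimality defining $A_e$ forces $A_f \cap A_e = A_e$, i.e., $A_e \subseteq A_f$. Combined with the matching sizes this yields $A_e = A_f$; in particular $f$ lies in $A_e$ as well.

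Next I split on the flavour of non-flexibility of $e$. If $e$ is non-contractible, Lemma \ref{lem:minimalsep} applied to $A_e = A_f$ gives $e \in \closure_M(A_f \smin e) \cap \closure_M(B_f)$, and a direct rank calculation yields $\lambda_M(A_f \smin e) = \lambda_M(A_f) = k$; hence $(A_f \smin e, B_f \cup e)$ is $S-T$-separating of order $k+1$ containing $f$ and strictly smaller on the $S$-side, the desired contradiction. If instead $e$ is non-deletable, the dual of Lemma \ref{lem:contractclosure} supplies a partition $(A', B')$ of order $k+1$ with $e \in A'$ and $e \in \coclosure_M(A' \smin e) \cap \coclosure_M(B')$; the same Lemma \ref{lem:modular} plus minimality argument forces $A_f \subseteq A'$, so $e \in \coclosure_M(B_f)$, and Lemma \ref{lem:closurecomplement} then gives $e \notin \closure_M(A_f \smin e)$. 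A short rank computation using this bounds $\lambda_M(A_f \smin e)$ above by $k$; the reverse inequality holds because $A_f \smin e$ is still $S-T$-separating, and once again $(A_f \smin e, B_f \cup e)$ contradicts minimality.

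The delicate part is the non-deletable case. Since Lemma \ref{lem:minimalsep} is stated only for non-contractible elements, I cannot read a closure condition off of $A_e$ directly. Instead I have to import a coclosure witness via the dual of Lemma \ref{lem:contractclosure}, transport it onto $B_f$ through the containment $A_f \subseteq A'$ (which goes in this direction precisely because $A_f$ is a \emph{minimum} rather than a maximum separating set), and finally convert coclosure back into a rank statement about $A_f \smin e$ by means of Lemma \ref{lem:closurecomplement}.
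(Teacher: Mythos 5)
Your proof is correct and follows essentially the same route as the paper: establish $A_e = A_f$ via Lemma \ref{lem:modular} and the minimality in the choice of $f$, then use non-flexibility of $e$ to show $(A_f\smin e, B_f\cup e)$ is still $S$--$T$-separating of order $k+1$, contradicting the minimality of $|A_f|$. The only difference is cosmetic: where the paper invokes Lemma \ref{lem:minimalsep} (implicitly together with its dual) to cover both flavours of non-flexibility, you work out the non-deletable case by hand, effectively re-proving the dual of that lemma.
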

    \begin{subproof}
        Suppose $g \in A_f\cap F$ with $g\neq f$. By our choice of $f$, we must have that $A_g = A_f$ (using Lemma \ref{lem:modular}). 
        Since $g$ is not flexible, Lemma \ref{lem:minimalsep} implies that $(A_f\smin g, B_f\cup g)$ is $S-T$-separating of order $k+1$, contradicting minimality of $|A_f|$.
    \end{subproof}
  By Lemma \ref{lem:growS} we can apply the theorem inductively, replacing $S$ by $A_f$ and $F$ by $F\smin f$, thus finding a sequence $(A_2, \ldots, A_t)$ of nested $A_f-T$-separating sets of order $k+1$. But now the sequence $(A_f, A_2, \ldots, A_t)$ satisfies all conditions of the theorem.
\end{proof}

We will use the following two facts:

\begin{lemma}\label{lem:contractinthemiddle}
    Let $M$ be a matroid, let $S,T$ be disjoint subsets of $E(M)$, let $k := \kappa_M(S,T)$, and let $(A_1, \ldots, A_t)$ be a sequence of nested $S-T$-separating sets of order $k+1$. 
    Let $(C,D)$ be a partition of $E(M)\smin (S\cup T)$ such that $C$ is independent, $D$ is coindependent, and $\lambda_{M\contract C \delete D}(S) = k$. Let $i, j \in \{1, \ldots, t\}$ with $i < j$. Let $C' := C \cap (A_j\smin A_i)$, let $D' := D\cap (A_j\smin A_i)$, and let $M' := M\contract C' \delete D'$. Then $(A_i, B_j)$ is $S-T$-separating of order $k+1$ in $M'$. Moreover, $M'|A_i = M|A_i$ and $M'|B_j = M|B_j$.
\end{lemma}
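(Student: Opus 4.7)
The plan is first to establish a structural rank identity that holds for every $S-T$-separation $(A,B)$ of $M$ with $\lambda_M(A)=k$:
\[\rank_M(A\cup C)=\rank_M(A)+|C\cap B|\qquad\text{and}\qquad \rank_M(B\cup C)=\rank_M(B)+|C\cap A|.\]
To derive it I would unfold the hypothesis $\lambda_{M\contract C\delete D}(S)=k$, using $\rank_M(C)=|C|$ (since $C$ is independent) and $\rank(M\contract C\delete D)=\rank(M)-|C|$ (since $D$ is coindependent in $M$, hence also in $M\contract C$), into
\[\rank_M(S\cup C)+\rank_M(T\cup C)=\rank(M)+|C|+k.\]
Since $S\subseteq A$ and $T\subseteq B$, monotonicity of rank lower-bounds $\rank_M(A\cup C)+\rank_M(B\cup C)$ by $\rank(M)+|C|+k$. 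On the other hand, the submodular bounds $\rank_M(X\cup C)\leq\rank_M(X)+|C\smin X|$ for $X\in\{A,B\}$, summed and combined with $\lambda_M(A)=k$, give the matching upper bound $\rank(M)+|C|+k$. Equality is therefore forced throughout and both submodular inequalities are tight, which is the asserted identity.

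Next I would instantiate the identity twice. With $(A,B)=(A_i,B_i)$ it yields $\rank_M(A_i\cup C)=\rank_M(A_i)+|C\smin A_i|$. Since $C\smin A_i$ is disjoint from $A_i$ and $C'\subseteq C\smin A_i$, the elementary fact that subsets of sets ``free'' over $A_i$ (i.e., achieving equality in the rank identity) are themselves free over $A_i$ gives $\rank_M(A_i\cup C')=\rank_M(A_i)+|C'|$. With $(A,B)=(A_j,B_j)$ it yields $\rank_M(B_j\cup C)=\rank_M(B_j)+|C\cap A_j|$, and the same subset argument produces $\rank_M(B_j\cup C')=\rank_M(B_j)+|C'|$. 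A standard basis-extension argument then shows that for $I\subseteq A_i$, $I$ is independent in $M$ iff $I\cup C'$ is independent in $M$, iff $I$ is independent in $M\contract C'$; since $D'\cup B_j$ is disjoint from $A_i$, deleting it does not affect independent subsets of $A_i$, so $M'|A_i=M|A_i$. The identity $M'|B_j=M|B_j$ follows by a symmetric argument.

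For the claim that $(A_i,B_j)$ is $S-T$-separating of order $k+1$ in $M'$, I would invoke Tutte's linking theorem rather than compute $\lambda_{M'}(A_i)$ directly from ranks. The matroid $M\contract C\delete D$ equals $M'\contract(C\smin C')\delete(D\smin D')$, so it is a minor of $M'$ with ground set $S\cup T$; Theorem \ref{thm:tuttelinkminmax} applied to $M'$ therefore gives $\kappa_{M'}(S,T)\geq\lambda_{M\contract C\delete D}(S)=k$, while Lemma \ref{lem:kappamonotone} gives $\kappa_{M'}(S,T)\leq\kappa_M(S,T)=k$, so $\kappa_{M'}(S,T)=k$. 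Since $(A_i,B_j)$ is a partition of $E(M')$ with $S\subseteq A_i$ and $T\subseteq B_j$, it follows that $\lambda_{M'}(A_i)\geq\kappa_{M'}(S,T)=k$, and Lemma \ref{lem:lambdamonotone} bounds it above by $\lambda_M(A_i)=k$, yielding equality.

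The main obstacle is the opening structural identity; once the submodular tightness is exposed, the rest is bookkeeping over the three pieces $C\cap A_i$, $C'$, and $C\cap B_j$ (and analogously for $D$), choosing judiciously whether to apply the identity at the separation $(A_i,B_i)$ or at $(A_j,B_j)$ so that $C'$ lands on the correct side of the partition.
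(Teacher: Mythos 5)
Your proof is correct, and the first half (that $(A_i,B_j)$ has order $k+1$ in $M'$) is essentially the paper's own argument: the minor $M\contract C\delete D$ survives in $M'$, so $\kappa_{M'}(S,T)=k$ by Tutte's Linking Theorem, and $\lambda_{M'}(A_i)$ is squeezed between this and $\lambda_M(A_i)=k$. Where you diverge is in establishing the restriction identities. The paper gets there by invoking Lemma \ref{lem:contractclosure} to conclude that each $e\in C'$ lies outside $\closure_M(A_i\cup(C'\smin\{e\}))$ and $\closure_M(B_j\cup(C'\smin\{e\}))$, which is exactly equivalent to your statements $\rank_M(A_i\cup C')=\rank_M(A_i)+|C'|$ and $\rank_M(B_j\cup C')=\rank_M(B_j)+|C'|$; you instead derive these from the tightness of the submodular bound
\begin{align*}
\rank_M(A\cup C)+\rank_M(B\cup C)\leq\rank_M(A)+\rank_M(B)+|C|=\rank(M)+k+|C|,
\end{align*}
whose matching lower bound is forced by $\lambda_{M\contract C\delete D}(S)=k$ together with $C$ independent and $D$ coindependent. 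Both routes are sound; yours has the advantage of being self-contained and fully explicit (the paper's ``It follows from Lemma \ref{lem:contractclosure} that\dots'' compresses an iterative argument), and your explicit observation that the hereditary nature of the freeness identity passes from $C\smin A_i$ (resp.\ $C\cap A_j$) down to the subset $C'$, followed by the basis-extension step and the remark that $D'$ is disjoint from both $A_i$ and $B_j$, cleanly disposes of the ``Moreover'' clause. No gaps.
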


\begin{proof}
    Let $M' := M\contract C' \delete D'$. By definition of $C$ and $D$, $\kappa_{M'}(S,T) = k$. By monotonicity of $\lambda$, $\lambda_{M'}(A_i) = k$. It follows from Lemma \ref{lem:contractclosure} that for all $e \in C'$, $e \notin \closure_M (A_i \cup (C' \smin \{e\})$ and $e \notin \closure_M (B_j \cup (C' \smin \{e\})$. From this the second claim follows.
\end{proof}

\begin{lemma}[{Geelen, Gerards, and Whittle \cite[Lemma 4.7]{GGW07}}]\label{lem:indepsubset}
    Let $M$ be a matroid, let $S, T$ be disjoint subsets of $E(M)$, and let $k := \kappa_M(S,T)$. There exist sets $S_1\subseteq S$ and $T_1\subseteq T$ such that $|S_1| = |T_1| = \kappa_M(S_1,T_1) = k$.
\end{lemma}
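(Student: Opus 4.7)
The plan is to prove the lemma by induction on $|S|+|T|$. If $|S|=|T|=k$ we may take $S_1=S$ and $T_1=T$ and are done by hypothesis. Otherwise, by symmetry, assume $|S|>k$; it suffices to exhibit an element $e\in S$ with $\kappa_M(S\smin e,T)=k$, for then induction applied to the pair $(S\smin e,T)$ produces $S_1\subseteq S\smin e\subseteq S$ and $T_1\subseteq T$ of the claimed form.

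To produce such an $e$, I would first use Tutte's Linking Theorem (Theorem~\ref{thm:tuttelinkminmax}) to pick a minor $N$ of $M$ with $E(N)=S\cup T$ and $\lambda_N(S)=k$. Set $r_S:=\rank_N(S)$, $r_T:=\rank_N(T)$, and $r:=\rank(N)$, so that $r_S+r_T-r=k$; since $r\geq r_T$, this forces $r_S\geq k$. If $S$ is not independent in $N$, pick any $e\in S$ lying on a circuit of $N$ contained in $S$, so $e\in\closure_N(S\smin e)$; then $N\delete e$ has the same three ranks $r_S,r_T,r$ as $N$, hence $\lambda_{N\delete e}(S\smin e)=k$. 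Otherwise $S$ is independent in $N$, so $|S|=r_S>k$; the inclusion $S\subseteq\closure_N(T)$ would force $r=r_T$ and hence $r_S=k$, a contradiction, so some $e\in S$ lies outside $\closure_N(T)$. Contracting such an $e$ drops $r_S$ and $r$ by one each while leaving $r_T$ unchanged, so $\lambda_{N\contract e}(S\smin e)=k$. In either case, the produced minor of $M$ has ground set $(S\smin e)\cup T$ and witnesses $\lambda(S\smin e)=k$, so Tutte's Linking Theorem applied to $M$ yields $\kappa_M(S\smin e,T)\geq k$; the reverse inequality $\kappa_M(S\smin e,T)\leq\kappa_M(S,T)=k$ is immediate from the definition of $\kappa$ since the min on the left ranges over a larger family of sets.

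I do not foresee a serious obstacle. The key insight is that Tutte's Linking Theorem lets one carry out the shrinking step inside a matroid whose ground set is exactly $S\cup T$, reducing everything to rank bookkeeping; splitting on whether $S$ is independent in $N$ then naturally dictates whether deletion or contraction is the correct move, the only subtle point being that in the contraction case one must choose the contracted element outside $\closure_N(T)$ to avoid dropping $r_T$.
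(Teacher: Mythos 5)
The paper does not prove this lemma at all --- it is quoted from Geelen, Gerards, and Whittle \cite[Lemma 4.7]{GGW07} --- so there is no in-paper argument to compare against; what you have written is a self-contained proof, and it is correct. The reduction to finding a single $e\in S$ with $\kappa_M(S\smin e,T)=k$ is sound: the inequality $\kappa_M(S\smin e,T)\le k$ is indeed immediate from the definition, and the lower bound follows from your minor $N'$ on $(S\smin e)\cup T$ with $\lambda_{N'}(S\smin e)=k$ via the max-formulation of Tutte's Linking Theorem (Theorem \ref{thm:tuttelinkminmax}). The rank bookkeeping in both branches checks out: if $e$ lies on a circuit of $N$ inside $S$ then $e\in\closure_N(S\smin e)$, so deletion preserves $r_S$, $r_T$, and $r$; if $S$ is independent and $e\in S\smin\closure_N(T)$, then contraction drops $r_S$ and $r$ by one and fixes $r_T$. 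The only step left implicit is that $k\le\min(|S|,|T|)$ (needed so that ``not the base case'' really implies $|S|>k$ or $|T|>k$); this follows by taking $X=S$ and $X=E\smin T$ in the definition of $\kappa_M(S,T)$, and is worth one sentence. With that added, the argument is complete and arguably more elementary than invoking the external reference.
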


\section{Proof of the result for finite fields}\label{sec:finitefield}
Let $M$ be a rank-$r$ matroid on ground set $E$. Write $M = M[D]$ if the $r\times E$ matrix $D$ (over field $\field$) represents $M$. For $S\subseteq E$, denote by $D[S]$ the submatrix of $D$ induced by the columns labeled by $S$, and denote by $\langle D[S]\rangle$ the vector space spanned by the columns of $D[S]$. To clean up notation we will write $\langle S\rangle$ for $\langle D[S]\rangle$ if $D$ is clear from the context.

Recall that, if $(A,B)$ is such that $\lambda_{M[D]}(A) = k$, then $\langle A\rangle \cap \langle B \rangle$ is a $k$-dimensional subspace of $\field^r$. Assume $\field = \GF(q)$. Denote by $M^+_{(A,B)}$ the matroid obtained from $M$ by adding a copy of $\PG(k-1,q)$ to $M$, such that in the representation it is contained in $\langle A\rangle \cap \langle B \rangle$. Furthermore, $M^+_A := M^+_{(A,B)}\delete B$ and $M^+_B := M^+_{(A,B)}\delete A$. Now we can carry out row operations to get $M^+_{(A,B)} = M[D']$, with
\begin{align*}
    D' = \kbordermatrix{ & A &  & X & & B\\
                         &     & \vline & 0 & \vline & 0\\
                         \cline{4-4} \cline{6-6}
                         &  \phantom{XXX}D_1\phantom{XXX}   & \vline  & &   \vline  &  \\
                         &  &\vline & P & \vline &   \\
                         &     & \vline  & &   \vline  & \phantom{XXX}D_2\phantom{XXX}  \\
                         \cline{2-2} \cline{4-4}
                         & 0 & \vline & 0 &  \vline &},
\end{align*}
where $P$ is a $k\times X$ matrix representing $\PG(k-1,q)$ (with elements labeled by $X$). We remark that $M^+_{(A,B)}$ is the generalized parallel connection of $M^+_A$ and $M^+_B$ along $X$ (cf. \cite[Section 11.4]{ox2}). The following lemma follows easily from Lemma \ref{lem:contractinthemiddle}.

\begin{lemma}\label{lem:contractguts}
    Let $M$ be a $\GF(q)$-representable matroid, let $S$ and $T$ be disjoint subsets of $E(M)$ with $\kappa_M(S,T) = k$, and let $(A,B)$ be $S-T$-separating of order $k+1$. Let $(C,D)$ be a partition of $E(M)\smin (S\cup T)$ such that $\lambda_{M\contract C \delete D}(S) = k$. Then $(M^+_{(A,B)} \contract C \delete D)|X = M^+_{(A,B)}|X$.
\end{lemma}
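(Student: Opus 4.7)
My plan is to reformulate the target identity as a skewness condition in the ambient column space and then verify that condition using the hypothesis $\lambda_{M \contract C \delete D}(S) = k$. First, I would fix a $\GF(q)$-representation of $M$ with column space $V$, and for $Y \subseteq E(M)$ write $V_Y$ for the span of the columns indexed by $Y$. Then the middle subspace $V_X := V_A \cap V_B$ is $k$-dimensional, and under the extension $M^+_{(A,B)}$ the ground-set element $X$ corresponds to the set of one-dimensional subspaces of $V_X$, so $M^+_{(A,B)}|X = \PG(k-1,q)$. Using the standard rank formula for a contraction, $\rank_{M^+_{(A,B)} \contract C \delete D}(X) = k - \dim(V_X \cap V_C)$; hence $(M^+_{(A,B)} \contract C \delete D)|X = M^+_{(A,B)}|X$ if and only if $V_X \cap V_C = \{0\}$.

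To establish this skewness I would first perform the routine reduction that allows one to assume $C$ is independent and $D$ is coindependent in $M$; in that case the ground set of $M \contract C \delete D$ is exactly $S \cup T$. Let $\pi\colon V \to V/V_C$ be the quotient map, and set $d := \dim(V_X \cap V_C)$. The main calculation has two halves. First, a standard application of the modular law, together with the identity $(V_A \cap V_C) + (V_B \cap V_C) = V_C$ (which holds because $V_{C_A} \subseteq V_A \cap V_C$ and $V_{C_B} \subseteq V_B \cap V_C$, so that $V_C = V_{C_A} + V_{C_B} \subseteq (V_A \cap V_C) + (V_B \cap V_C)$), yields $\dim(\pi(V_A) \cap \pi(V_B)) = k - d$. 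Second, because the ground set of $M \contract C \delete D$ is $S \cup T$, the hypothesis translates to $\dim(\pi(V_S) \cap \pi(V_T)) = k$. Since $V_S \subseteq V_A$ and $V_T \subseteq V_B$ give $\pi(V_S) \cap \pi(V_T) \subseteq \pi(V_A) \cap \pi(V_B)$, this forces $k \le k - d$, and hence $d = 0$.

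The link to Lemma \ref{lem:contractinthemiddle} is by kinship rather than by citation: in $M^+_{(A,B)}$ both $A$ and $A \cup X$ are $S$-$T$-separating of order $k+1$, so $(A, A \cup X)$ is a nested sequence of exactly the kind that the lemma contemplates, and the essential content of the lemma --- that contractions and deletions ``in the middle'' preserve the surrounding rank structure --- is what the computation above makes quantitative in the projective-geometry setting. The main technical obstacle is the rank identity $\dim(\pi(V_A) \cap \pi(V_B)) = k - d$ via $(V_A \cap V_C) + (V_B \cap V_C) = V_C$; once that linear-algebraic step is in hand, the remaining bookkeeping is routine.
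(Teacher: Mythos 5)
Your argument is correct, and it takes a genuinely different route from the paper's. The paper offers no written proof at all for this lemma: it is dispatched with the single remark that it ``follows easily from Lemma~\ref{lem:contractinthemiddle}'', i.e.\ from the purely matroidal statement about contracting and deleting between two nested separations (which in turn rests on the closure conditions of Lemma~\ref{lem:contractclosure}). You instead verify the statement directly in the representation: the identity $(V_A\cap V_C)+(V_B\cap V_C)=V_C$ (valid because $C$ splits as $(C\cap A)\cup(C\cap B)$) plus the modular law gives $\dim(\pi(V_A)\cap\pi(V_B))=k-d$, while the hypothesis $\lambda_{M\contract C\delete D}(S)=k$ gives $\dim(\pi(V_S)\cap\pi(V_T))=k$, and the containment $\pi(V_S)\cap\pi(V_T)\subseteq\pi(V_A)\cap\pi(V_B)$ forces $d=0$, whence $\pi$ is injective on $V_X\supseteq\langle X\rangle$ and the restriction to $X$ is preserved. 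I checked the dimension count and it is right. Your approach buys a self-contained, checkable proof, and notably one that works for an arbitrary partition $(C,D)$: this matters because Lemma~\ref{lem:contractinthemiddle} as stated assumes $C$ independent and $D$ coindependent while Lemma~\ref{lem:contractguts} does not, so the paper's ``follows easily'' silently needs the very reduction you mention. Ironically, in your proof that reduction is dead code --- nothing in your computation uses independence of $C$ or coindependence of $D$ (the ground set of $M\contract C\delete D$ is $S\cup T$ simply because $(C,D)$ partitions $E(M)\smin(S\cup T)$) --- and if you did perform it you would owe a word on why the conclusion for the modified pair $(C^*,D^*)$ transfers back to the original $(C,D)$, since $\langle C^*\rangle$ need not equal $\langle C\rangle$. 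Best to delete that sentence and run the argument for $(C,D)$ as given; also note that only the ``if'' direction of your opening equivalence is used.
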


We repeat the main result, filling in an explicit value for the constant:

\newcommand{\cfun}{\ensuremath{\textrm{TODO}(q, k, n)}}
\begin{theorem}\label{thm:intertwinewithminorrepeat}
    Let $q$ be a prime power, let $M$ be a $\GF(q)$-representable matroid on ground set $E$, let $N$ be a minor of $M$ on $n$ elements, let $S,T \subseteq E$, and let $k := \kappa_M(S, T)$. If $|E\smin (S\cup T)| > n + 2(n+1)q^{n^2}$, then there exists an element $e \in E$ such that at least one of the following holds:
    \begin{enumerate}
        \item $\kappa_{M\delete e}(S, T) = k$ and $N$ is a minor of $M\delete e$;
        \item $\kappa_{M\contract e}(S, T) = k$ and $N$ is a minor of $M\contract e$.
    \end{enumerate}
\end{theorem}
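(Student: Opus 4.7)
Suppose for contradiction no such $e$ exists, and fix a realization $(C_0, D_0)$ of $N$ as a minor: $M/C_0\setminus D_0 = N$. First observe that no element of $F := E(M)\smin(S\cup T\cup E(N))$ can be flexible with respect to $(S,T)$: a flexible $e$ lies in either $C_0$ or $D_0$, and the corresponding operation (contraction if $e\in C_0$, deletion if $e\in D_0$) preserves both $\kappa(S,T)$ and the $N$-minor, giving the desired good element. By Tutte's Linking Theorem each $e\in F$ is therefore only deletable or only contractible; the assumed non-existence of a good element further forces $e\in C_0$ whenever $e$ is only deletable, and $e\in D_0$ whenever $e$ is only contractible.

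Apply Theorem \ref{thm:mengerfragile} to $F$ to obtain a nested sequence $A_1\subset\cdots\subset A_t$ of $S-T$-separating sets of order $k+1$ with $t = |F| > 2(n+1)q^{n^2}$. Since $|E(N)| = n$, the set $A_i\cap E(N)$ changes at most $n$ times along the sequence, yielding at most $n+1$ maximal intervals on which $A_i\cap E(N) =: E_A$ is constant; one such interval $I$ has length exceeding $2q^{n^2}$. By Theorem \ref{thm:mengerfragile}(iv), each $f_i$ lies in the guts or coguts of $(A_i,B_i)$; a further pigeonhole yields a subset $J\subseteq I$ with $|J| > q^{n^2}$ on which all $f_i$ are of one kind, say guts (the coguts case is dual). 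For $i\in J$, $f_i$ is deletable and hence, by assumption, $f_i\in C_0$; moreover $f_i$ determines a well-defined point of the PG $X_i$ in $M^+_{(A_i,B_i)}$.

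For $i\in J$, define a \emph{type} $\tau_i$ as the isomorphism class of the matroid $\widetilde{P}_i := M^+_{A_i}/(C_0\cap A_i)\setminus(D_0\cap A_i)$ on $E_A\cup X_i$, considered up to isomorphisms that are the identity on $E_A$ and induce a collineation of $X_i\cong\PG(k-1,q)$. By standard counting---essentially bounding the number of rank-$\leq n+1$ linear configurations in $\GF(q)^k$ modulo $\mathrm{GL}_k(q)$ action---the number of types is at most $q^{n^2}$. Pigeonhole then gives indices $i<j$ in $J$ with $\tau_i = \tau_j$; fix an isomorphism $\phi:X_i\to X_j$ witnessing the equivalence.

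Construct $M' := M^+_{A_i}\oplus_X M^+_{B_j}$, the generalized parallel connection along $X := X_i = X_j$ via $\phi$. The central claim is that $M'$ equals the minor $M/(C_0\cap(A_j\smin A_i))\setminus(D_0\cap(A_j\smin A_i))$ of $M$; this follows from type matching together with Lemma \ref{lem:contractguts} and Lemma \ref{lem:contractinthemiddle}, which jointly ensure that the middle contractions/deletions preserve the guts structure $X_i$ and yield $\lambda_{M'}(A_i) = k$, hence $\kappa_{M'}(S,T) = k$. Moreover, $N$ is a minor of $M'$ via the realization $(C_0\cap E(M'), D_0\cap E(M'))$: type agreement guarantees that the resulting $\widetilde{P}_i\oplus_X\widetilde{Q}_j$ (with $\widetilde{Q}_j$ the analogous $B_j$-side minor) coincides with $\widetilde{P}_j\oplus_X\widetilde{Q}_j$, whose deletion of $X$ is $N$. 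Since $M'\preceq M$ is a strict minor satisfying both conclusions of the theorem, at every intermediate stage of any single-step deletion/contraction sequence realizing $M\to M'$, both $\kappa(S,T)=k$ (squeezed between $\kappa_M$ and $\kappa_{M'}$ via Lemma \ref{lem:kappamonotone}) and the $N$-minor (inherited from $M'$) persist, so the first element removed is a good element---contradicting our assumption. The principal obstacle is the central claim about $M'$: it requires verifying that type matching gives exactly the compatibility needed for the GPC to realize the intended minor of $M$ with intact guts. This is where the $\GF(q)$-hypothesis enters essentially, via the finiteness of types that drives the pigeonhole.
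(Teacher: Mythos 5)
Your overall architecture matches the paper's: build the nested sequence from Theorem \ref{thm:mengerfragile}, pigeonhole on $A_i\cap E(N)$ and on guts/coguts, attach a copy of $\PG(k-1,q)$ to the guts, pigeonhole on a bounded-size certificate matroid, and splice. But the final splice is wrong, and the error is fatal rather than cosmetic. You define $M'$ by contracting $C_0\cap(A_j\smin A_i)$ and deleting $D_0\cap(A_j\smin A_i)$, where $(C_0,D_0)$ is the realization of $N$. By your own earlier (correct) observation, every only-deletable element of $F$ lies in $C_0$; in particular $f_j\in C_0\cap(A_j\smin A_i)$ is deletable but \emph{non-contractible}. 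Contracting it gives $\kappa_{M/f_j}(S,T)\leq k-1$, so $\kappa_{M'}(S,T)\leq k-1$, not $k$. Your appeal to Lemmas \ref{lem:contractguts} and \ref{lem:contractinthemiddle} does not help: both lemmas require the middle to be removed according to a partition $(C,D)$ coming from Tutte's Linking Theorem (i.e.\ with $\lambda_{M\contract C\delete D}(S)=k$), and $(C_0,D_0)$ has no such property — indeed contracting elements of $C_0\cap(A_j\smin A_i)$ can also lie in $\closure_M(A_i)$ and thereby change $M|A_i$, so the claimed identity between $M'$ and the generalized parallel connection fails as well. The paper's proof does the splice the other way around: it removes the middle according to the linking-theorem partition $(C,D)$, so that connectivity preservation is automatic, and uses the pigeonhole (equality of the labeled representation matrices $H_i=H_j$ of the $B$-side certificates) precisely to prove the nontrivial half, namely that the $N$-minor survives. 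You have made the $N$-minor survival trivial at the cost of destroying the connectivity, which inverts the difficulty rather than resolving it.

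A secondary gap: you count types only up to isomorphisms inducing an arbitrary collineation of $X_i$, and then splice via an abstract isomorphism $\phi$ witnessing $\tau_i=\tau_j$. Even if the connectivity issue were fixed, $M^+_{A_i}\oplus_X M^+_{B_j}$ glued via an arbitrary such $\phi$ need not be a minor of $M$: the gluing must use the specific identification of $\langle A_i\rangle\cap\langle B_i\rangle$ with $\langle A_j\rangle\cap\langle B_j\rangle$ induced by actually removing the middle elements. The paper handles this by canonically labeling each $X_i$ so that $x^i_m$ becomes parallel to $x^1_m$ after applying the linking minor (Lemma \ref{lem:contractguts}), and then pigeonholes on \emph{labeled} representation matrices; the count $q^{n^2}$ is justified there because the $X_1$-columns are literally identical across all $i$ and only the $n$ columns indexed by $E(N)\cap B_i$ vary. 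Your ``up to collineation'' quotient discards exactly the compatibility data the splice needs.
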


The proof is not hard, but unfortunately we could not avoid using rather involved notation. For that reason we give a rough sketch of the idea. Let $M$ be a counterexample. First we construct a long sequence $(A_1, B_1), \ldots, (A_t, B_t)$ of nested $S-T$-separating partitions of order $k+1$. For each $i$ we define the matroid $M_i$, obtained from $M^+_{B_i}$ by deleting or contracting the elements of $B_i\smin E(N)$ so that the minor $N$ is preserved. Since each $M_i$ will have the same number of elements, only a finite number of distinct represented matroids can arise. Since our matroid is sufficiently large it follows that, after suitably relabeling the new elements, $M_i = M_j$ for some $i < j$. This shows that the elements in $A_j\smin A_i$ can be removed in such a way that both $N$ and the $S-T$-connectivity are preserved, which contradicts our choice of $M$.

\begin{proof}
    Let $q, M, N, n, S, T$, and $k$ be as stated, and assume $|E\smin (S\cup T)| > n + 2(n+1)q^{n^2}$, yet no element can be removed keeping both the $S-T$-connectivity and the minor $N$. Let $(C, D)$ be a partition of $E\smin (S\cup T)$ such that $\lambda_{M\contract C \delete D}(S) = \kappa_M(S,T)$ and such that $C$ is independent and $D$ coindependent. Let $(C_N, D_N)$ be a partition of $E\smin E(N)$ such that $N = M\contract C_N \delete D_N$ and such that $C_N$ is independent and $D_N$ coindependent. By our assumption, $C\cap C_N = \emptyset$ and $D\cap D_N = \emptyset$.    
    
     Let $F := C\cup D\smin E(N)$, and let $t'' := |F|$. Then $t'' > 2(n+1)q^{n^2}$. By Theorem \ref{thm:mengerfragile}, there is a nested sequence $(A''_1, \ldots, A''_{t''})$ of $S-T$-separating sets of order $k+1$ such that $A''_{i} \subsetneq A''_{i+1}$ for $i\in\{1,\ldots, t''-1\}$. Let $(f_1'', \ldots, f_{t''}'')$ be the corresponding ordering of $F$.
     Consider the sequence $(A''_1\cap E(N), \ldots, A''_{t''}\cap E(N))$. This sequence contains at most $n+1$ different elements. It follows that $(A_1'', \ldots, A''_{t''})$ has a subsequence $(A'_1, \ldots, A'_{t'})$ such that $A'_i \cap E(N) = A'_j\cap E(N)$ for all $i,j \in \{1,\ldots, t'\}$, and such that $t' \geq t''/(n+1) > 2 q^{n^2}$. 
    
    Let $(f'_1, \ldots, f'_{t'})$ be the corresponding subsequence of $F$. Using duality if necessary we may assume that $|\{f'_1, \ldots, f'_{t'}\}\cap C| \geq |\{f'_1, \ldots, f'_{t'}\}\cap D|$. Let $(A_1, \ldots, A_t)$ be a subsequence of $(A'_1, \ldots, A'_{t'})$ such that $A_{i+1}\smin A_i$ contains an element of $C$ for all $i\in \{1,\ldots, t-1\}$, and such that $t \geq t'/2 > q^{n^2}$. For each $i \in \{1,\ldots,t\}$, define $B_i := E \smin A_i$.
    
    Let $H$ be an $r\times E$ matrix over $\GF(q)$ representing $M$. Let $s := (q^{k}-1)/(q-1)$. For each $i$, let $W_i := \langle A_i \rangle \cap \langle B_i \rangle$, and let $X_i := \{x^i_1, \ldots, x^i_s\}$ be a set of labels disjoint from $E$ and disjoint from $X_j$ for all $j \in \{1,\ldots,t\}\smin\{i\}$. Let the $k\times X_1$ matrix $P_1$ be an arbitrary representation of $\PG(k-1,q)$ having ground set $X_1$. 
    
    For each $i \in \{1,\ldots, t\}$, let $M^+_i$ be the matroid $M^+_{(A_i,B_i)}$ with the set $X$ relabeled by $X_i$. Moreover, we assume this labeling was chosen such that, in $(M^+_1)^+_i \contract C \delete D$, $x^i_j$ is parallel to $x^1_j$ for all $j \in \{1,\ldots,s\}$ (where $(M^+_1)^+_i$ is defined in the obvious way). This can be done because of Lemma \ref{lem:contractguts}.
    
    Now we define, for each $i$, a matroid $N_i$ as follows: first set $N_i' := (M^+_i \delete A_i)\contract (C_N\cap B_i) \delete (D_N\cap B_i)$. Now $N_i$ is obtained from $N_i'$ by relabeling $x^i_j$ by $x^1_j$. Let $H_i$ be the corresponding representation matrix. Note that, for $i, j \in \{1,\ldots, t\}$, $E(N_i) = E(N_j) \subseteq E(N)\cup X_1$. Hence $|E(N_i)| \leq n + s$. Since $X_i \subseteq \langle B_i \rangle$, we find that $\rank(N_i) \leq n$. Furthermore, for all $x \in X_1$, $H_i[x] = H_j[x]$. Hence there are at most $((q^n -1)/(q-1) + 1)^n \leq q^{n^2}$ distinct representation matrices $H_i$. Since $t > q^{n^2}$, there exist $i, j \in \{1,\ldots, t\}$ with $i < j$ such that $H_i = H_j$. But then
        $M\contract (B_i\cap C_N) \delete (B_i\cap D_N)$ is equal to
    \begin{align*}
        \big(M\contract ((A_j\smin A_i)\cap C) \delete ((A_j\smin A_i)\cap D)\big) \contract (B_j\cap C_N) \delete (B_j \cap D_N),
    \end{align*}
    using Lemmas \ref{lem:contractinthemiddle} and \ref{lem:contractguts}. In particular, since $(A_j\smin A_i)\cap C \neq \emptyset$, there exists an $e \in C$ such that $\kappa_{M\contract e}(S,T) = k$ and $M\contract e$ has $N$ as minor, a contradiction.
\end{proof}
For completeness we show that Conjecture \ref{con:mainres} follows from Theorem \ref{thm:intertwinewithminorrepeat} when $M$ is $\GF(q)$-representable.
\begin{proof}[Proof of Conjecture \ref{con:mainres} for $\GF(q)$-representable matroids]
    Let $n := |Q\cup R|$ and set $c := n + 2(n+1)q^{n^2}$. Let $(C, D)$ be a partition of $E\smin (Q\cup R)$ such that $\lambda_{M\contract C \delete D}(Q) = k$. By Theorem \ref{thm:tuttelinkminmax}, $C$ and $D$ exist. Now apply Theorem \ref{thm:intertwinewithminorrepeat} with $N = M\contract C \delete D$, $S$, and $T$. The result follows.
\end{proof}


\section{Intertwining two connectivities}\label{sec:mainres}

In this section we prove Conjecture \ref{con:mainres} for all representable matroids. The key property we need for our proof is that we can add a point to the intersection of two non-skew flats. Formally:

\begin{definition}\label{def:intersectionclosed}
A matroid $M$ has the \emph{intersection property} if for all flats $S, T \in E(M)$ such that $\localconn(S,T) > 0$, there exist a matroid $N$ and a non-loop element $e\in E(N)$ such that $N\delete e = M$, and $e \in \closure_N(S)\cap\closure_N(T)$.  In this case, we say that $N$ is a \emph{good extension} of $M$ (with respect to $S,T$).
    A class of matroids $\mathcal{M}$ is \emph{intersection-closed} if every $M \in \mathcal{M}$ has the intersection property, and $\mathcal{M}$ is closed under minors, duality, and good extensions.
\end{definition}

Note that the class of representable matroids is evidently intersection-closed.   The V\'amos matroid shows that not all matroids have the intersection property.  See \cite{Bon11} for more on matroids with the intersection property. 

The restriction we use is reminiscent of the double-circuit property from the min-max theorem for matroid matching (see \cite{DL87}). However, whereas the min-max theorem is false even for affine spaces, in our case the condition appears to be just an artifact of our proof. We remain hopeful that Conjecture \ref{con:mainres} can be proven without this condition. We will now state and prove the main result.

\begin{theorem}\label{thm:mainres2}
    There exists a function $c: \N^2 \to \N$ with the following property. Let $M$ be a matroid in an intersection-closed family, and let $Q, R, S, T, F \subseteq E(M)$ be sets of elements such that $Q\cap R = S\cap T = \emptyset$ and $F \subseteq E(M)\smin (Q\cup R\cup S \cup T)$. Let $k := \kappa_M(Q,R)$ and $l := \kappa_M(S,T)$. If $|F| \geq c(k,l)$, then there exists an element $e\in F$ such that one of the following holds:
    \begin{enumerate}
        \item $\kappa_{M\delete e}(Q, R) = k$ and $\kappa_{M\delete e}(S, T) = l$;
        \item $\kappa_{M\contract e}(Q, R) = k$ and $\kappa_{M\contract e}(S, T) = l$.
    \end{enumerate}
\end{theorem}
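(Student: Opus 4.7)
The plan is to adapt the proof of Theorem~\ref{thm:intertwinewithminorrepeat} to the intersection-closed setting, with the intersection property playing the role of $\PG(k-1,q)$. By Lemma~\ref{lem:indepsubset} we may assume $|Q|=|R|=k$ and $|S|=|T|=l$: preserving the connectivity of these small witnessing sets preserves $\kappa(Q,R)$ and $\kappa(S,T)$. By Theorem~\ref{thm:tuttelinkminmax} fix partitions $(C_Q,D_Q)$ of $E\smin(Q\cup R)$ and $(C_S,D_S)$ of $E\smin(S\cup T)$ certifying the two connectivities, and set $N_Q:=M\contract C_Q\delete D_Q$, so that $|E(N_Q)|=2k$ and preserving $N_Q$ as a minor of $M\delete e$ or $M\contract e$ implies $\kappa(Q,R)=k$ in that minor. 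Suppose for contradiction no element of $F$ can be removed preserving both connectivities. By pigeonhole, at least $|F|/2$ elements of $F$ are non-flexible with respect to $(S,T)$; call this subset $F_1$. Theorem~\ref{thm:mengerfragile} yields a nested sequence $A_1\subsetneq\cdots\subsetneq A_{|F_1|}$ of $(S,T)$-separating sets of order $l+1$, each step adding one new element of $F_1$. Pigeonholing on the bounded intersections $A_i\cap E(N_Q)$ (at most $2^{2k}$ possibilities), pass to a subsequence $A_{i_1}\subsetneq\cdots\subsetneq A_{i_\tau}$ of length $\tau\geq |F|/2^{2k+1}$ along which $A_{i_j}\cap E(N_Q)$ is constant.

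For each $j$ the guts $G_j:=\closure_M(A_{i_j})\cap\closure_M(B_{i_j})$ is a rank-$l$ flat, and iterating the intersection property $l$ times (using closure of the class under good extensions) we extend $M$ by $l$ new elements $X_j=\{x^j_1,\ldots,x^j_l\}$ forming a basis of $G_j$, yielding a matroid $M^+_j$ in the class. Define
\[
N_j := (M^+_j\delete A_{i_j})\contract(C_Q\cap B_{i_j})\delete(D_Q\cap B_{i_j}).
\]
Then $N_j$ lies in the class, and $E(N_j)\subseteq X_j\cup(E(N_Q)\cap B_{i_j})$ has size at most $l+2k$; moreover $E(N_Q)\cap B_{i_j}$ is constant across $j$. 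After relabeling each $X_j$ to a fixed $l$-element set $X$, all $N_j$ share the same labeled ground set. Since there are only finitely many matroids on any ground set of bounded size (in particular, only finitely many within our class), and since there are only finitely many bijections $X_j\to X$, pigeonhole provides indices $j<j'$ and a bijection $\phi:X_{j'}\to X_j$ identifying $N_j$ with $N_{j'}$.

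Combining Lemma~\ref{lem:contractinthemiddle} applied to the nested $(S,T)$-separations and partition $(C_S,D_S)$ (which preserves $M|A_{i_j}$ and $M|B_{i_{j'}}$ under the contract-delete operations on $(A_{i_{j'}}\smin A_{i_j})\cap(C_S\cup D_S)$) with an intersection-property analog of Lemma~\ref{lem:contractguts} (which preserves the basis $X_j$ of the guts under these same operations), the equality $N_j=N_{j'}$ lifts to the identity
\[
M\contract(C_Q\cap B_{i_j})\delete(D_Q\cap B_{i_j}) = M^\star\contract(C_Q\cap B_{i_{j'}})\delete(D_Q\cap B_{i_{j'}}),
\]
where $M^\star := M\contract((A_{i_{j'}}\smin A_{i_j})\cap C_S)\delete((A_{i_{j'}}\smin A_{i_j})\cap D_S)$. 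Since $A_{i_{j'}}\smin A_{i_j}$ contains an element $e$ of $F_1\subseteq F\subseteq C_S\cup D_S$, contracting $e$ (if $e\in C_S$) or deleting $e$ (if $e\in D_S$) yields a minor of $M$ preserving both $N_Q$ and $\kappa(S,T)=l$, contradicting our assumption. The chief obstacle is justifying the intersection-property analog of Lemma~\ref{lem:contractguts}: in the finite-field case the full $\PG$-structure automatically rigidifies $X$ under contract-delete, whereas here, with $X_j$ only a basis of the guts, one must verify that the intersection property delivers enough rigidity to keep $X_j$ fixed under the operations dictated by $(C_S,D_S)$, and that the relabeling $\phi$ produces a well-defined matroid in the class by gluing the two sides of the $(A_{i_j},B_{i_{j'}})$-separation along the identified bases.
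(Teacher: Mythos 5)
Your proposal transplants the pigeonhole argument of Theorem \ref{thm:intertwinewithminorrepeat} to the intersection-closed setting, but this is precisely the route the paper rules out: as remarked after Theorem \ref{thm:intertwinewithminor}, that theorem becomes \emph{false} once the dependence on $q$ is removed (via the arbitrarily long blocking sequences of \cite[Proposition 6.1]{GHW05}), and representable matroids over an infinite field form an intersection-closed class. Since you arrange to preserve $\kappa(Q,R)$ by preserving the fixed $2k$-element minor $N_Q$, completing your argument would prove exactly that false statement. The concrete failure is the ``lifting'' step, which you yourself flag as the chief obstacle but do not resolve --- and it cannot be resolved. In the finite-field proof, $H_i = H_j$ is an equality of representation matrices over the same field, and the added set $X$ is a full copy of $\PG(k-1,q)$, a modular flat, so that $M^+_{(A,B)}$ is the generalized parallel connection of $M^+_A$ and $M^+_B$ along $X$ and the two sides splice unambiguously. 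With only a basis $X_j$ of the guts, the flat it spans is not modular, the amalgam of the $A$-side with $N_{j'}$ along the identified bases is neither unique nor guaranteed to exist in the class, and an abstract isomorphism $N_j\cong N_{j'}$ carries no information about how the $A$-side attaches beyond the guts. So $N_j = N_{j'}$ does not yield your claimed identity of minors of $M$, and the contradiction never materializes. (Your pigeonhole on ``finitely many matroids on a bounded ground set'' is fine; it is the splice that breaks.)

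The paper's actual proof is entirely different: an induction on $k+l$. One builds a nested sequence of $Q$--$R$ separations via Theorem \ref{thm:mengerfragile}, selects a middle separation $(A,B)$ meeting $S$ and $T$ suitably on the $A$-side, removes the elements of $A$ one by one while preserving $\kappa(S,T)$, and then uses the intersection property only to add a \emph{single} point $s$ in $\closure(S_1)\cap\closure(T_1)$ when $\localconn(S_1,T_1)>0$; contracting $s$ drops $l$ (and in one subcase $k$) by one, and the inductive hypothesis is applied to the elements of $F$ on the $B$-side. To repair your proof you would need to abandon the minor-preservation strategy altogether and find a connectivity-based reduction of this kind.
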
    

\begin{proof}
    We prove that the result holds for $c(k,l) := 4^{k+l}$. We proceed by induction on $k+l$, noting that the base case where $k = 0$ or $l = 0$ is straightforward. Assume that the result holds for all $k', l'$ with $k'+l' < k+l$, but that $M, Q, R, S, T, F$ form a counterexample. 
    Possibly after relabeling we may assume $k \leq l$. By Lemma \ref{lem:indepsubset} we can assume that $|S| = |T| = l$, and that $S$ and $T$ are independent sets. Furthermore, we can assume that for each $e\in F$ either $\kappa_{M\delete e}(Q,R) < k$ or $\kappa_{M\contract e}(Q,R) < k$.
    
    \begin{claim}
        There exists a $Q-R$ separating partition $(A,B)$ with $\lambda(A)= k$, such that $A\cap S \neq \emptyset$, $A\cap T \neq \emptyset$, $|A\cap (S\cup T)| \geq l$, and $|B\cap F| \geq \lfloor\frac{1}{2} c(k,l)\rfloor$.
    \end{claim}
    \begin{subproof}
        Let $(A_1, \ldots, A_t)$ be the nested sequence of $Q-R$ separating sets from Theorem \ref{thm:mengerfragile}, let $(B_1, \ldots, B_t)$ be their complements, and let $(f_1, \ldots, f_t)$ be the corresponding ordering of $F$. Let $i := \lfloor t/2\rfloor$. First we show that one of $A_i$ and $B_i$ meets both of $S$ and $T$. Indeed: otherwise we have (possibly after swapping $S$ and $T$) that $S\subseteq A_i$ and $T\subseteq B_i$. In that case $(A_i, B_i)$ is $S-T$ separating with $\lambda_M(A_i) = k$. It follows that $k = l$. Assume $f_i$ is non-contractible with respect to $(Q,R)$. Then $\lambda_{M\contract f_i}(B_i) = k-1$, and therefore $f_i$ is also non-contractible with respect to $(S,T)$, so the theorem holds with $e = f_i$.
        
        Hence, possibly after exchanging the sequences $(A_1, \ldots, A_t)$ and $(B_1, \ldots, B_t)$, we can assume $A_i\cap S \neq \emptyset$ and $A_i\cap T \neq \emptyset$. If $|A_i\cap (S\cup T)| < l$ then $|B_i\cap (S\cup T)| > l$, and therefore $(A, B) = (B_i, A_i)$ is a partition as desired; otherwise we simply take $(A,B) = (A_i, B_i)$.
    \end{subproof}
    
    If necessary, we relabel $Q$ and $R$ so that $Q\subseteq A$ and $R\subseteq B$.
    Define
    \begin{align*}
     S_1 := A\cap S &\qquad T_1 := A\cap T\\
     S_2 := B\cap S &\qquad T_2 := B\cap T.  
    \end{align*}
     Also define $F_2 := B\cap F$. We try to remove the elements from $A$ while preserving the $S-T$ connectivity. Let $N_0 := M$, and order the elements of $A\smin (S_1\cup T_1)$ arbitrarily as $a_1, \ldots, a_u$. For $i = 1, 2, \ldots, u$ define $N_i$ as follows. If $\kappa_{N_{i-1}\delete a_i}(S,T) = l$ and $a_i \not \in \coclosure_{N_{i-1}}(B)$, then $N_i := N_{i-1}\contract a_i$. Else, if $\kappa_{N_{i-1}\contract a_i}(S,T) = l$ and $a_i \not \in \closure_{N_{i-1}}(B)$, then $N_i := N_{i-1}\contract a_i$. Otherwise $N_i := N_{i-1}$. Observe that $\kappa_{N_u}(S,T) = l$ and $\kappa_{N_u}(A\cap E(N_u), R) = \lambda_{N_u}(B) = k$. We distinguish two cases.
    
    \paragraph{Case I: $\localconn_{N_u}(S_1,T_1) > 0$.}
    Since $N_u$ is a member of an intersection-closed family, we can find a matroid $N^+$ in this family with a non-loop element $s$ such that $N^+\delete s = N_u$, and $s\in \closure_{N^+}(S_1)\cap\closure_{N^+}(T_1)$. We distinguish two subcases:
    
    \paragraph{Case Ia: $s \not\in \closure_{N^+}(B)$.}
    Let $N := N^+ \contract s$, and define $Q' := A\cap E(N)$. Then $\kappa_{N}(S,T) = l-1$ and $\kappa_N(Q', R) = k$. Since $|F_2| \geq c(k, l-1)$, by induction we can find an element $e \in F_2$ such that either $\kappa_{N\contract e}(S,T) = l-1$ and $\kappa_{N\contract e}(Q',R) = k$, or $\kappa_{N\delete e}(S,T) = l-1$ and $\kappa_{N\delete e}(Q',R) = k$. We assume the former, and remark that the proof for the latter case is similar.
    
    \begin{claim}
        $\kappa_{M\contract e}(Q,R) = k$ and $\kappa_{M\contract e}(S,T) = l$.
    \end{claim}
    \begin{subproof}
        Suppose $\kappa_{M\contract e}(Q,R) < k$, that is, $e$ is non-contractible with respect to $(Q,R)$. By Lemma \ref{lem:growS}, $e$ is also non-contractible with respect to $(A,R)$ in $M$. But $(A, B)$ is $Q'-R$ separating, so we must have $\lambda_{M\contract e}(A) = k$, a contradiction.       
        %
        
            Next, let $C, D$ be such that $C$ is independent in $N$, $e\in C$ and, in $N_0 := N\contract C \delete D$, we have $E(N_0) = S\cup T$ and $\lambda_{N_0}(S) = l-1$. Since $C$ is independent in $N^+\contract s$, it follows that $s$ is not a loop in $N^+\contract C$. Let $N_0^+ := N^+\contract C \delete D$. Since $s\in\closure_{N_0^+}(S) \cap \closure_{N_0^+}(T)$, we must have that $\lambda_{N_0^+\delete s}(S) = l$. It follows that $\kappa_{M\contract e}(S, T) = l$ as desired. 
    \end{subproof}
    
    \paragraph{Case Ib: $s \in \closure_{N^+}(B)$.} Again we define $Q' := A\cap E(N)$. Let $(A_1, \ldots, A_{t'})$ be the nested sequence of $Q'-R$ separating sets in $N^+$ from Theorem \ref{thm:mengerfragile} (applied to $N, Q', R$, and $F_2$), let $(B_1, \ldots, B_{t'})$ be their complements, and let $(f_1, \ldots, f_{t'})$ be the corresponding ordering of $F_2$. Let $j := c(k-1, l-1)$. If $s \not\in \closure_{N^+}(B_{j})$ then we apply the arguments from Case (Ia) with $A_{j}\cap E(N)$ replacing $Q'$, $B_{j}$ replacing $B$, and $F\cap B_{j}$ replacing $F_2$. Otherwise, let $N := N^+\contract s$, define $R' := B_{j}$ and $F_2' := F_2\smin B_{j}$. We have $\kappa_N(Q', R') = k-1$ and $\kappa_N(S,T) = l-1$. Since $|F_2'| \geq c(k-1,l-1)$, we find by induction an element $e\in F_2'$ such that either $\kappa_{N\contract e}(Q',R') = k-1$ and $\kappa_{N\contract e}(S,T) = l-1$, or $\kappa_{N\delete e}(Q',R') = k-1$ and $\kappa_{N\delete e}(S,T) = l-1$. We assume the latter, and remark that the proof in the former case is similar.

        \begin{claim}\label{cl:previousclaim}
            $\kappa_{M\delete e}(Q,R) = k$ and $\kappa_{M\delete e}(S,T) = l$.
        \end{claim}
        \begin{subproof}
            Suppose $e = f_i'\in F_2'$ is non-deletable with respect to $(Q,R)$. Then $e\in \coclosure_{N_u}(B_{i'})$, so $\lambda_{N_u\delete e}(B_{i'}) = k-1$. But $s \in \closure_{N^+}(B_{i'}) \cap \closure_{N^+}(A_{i'}\smin e)$, so we must have $\lambda_{N^+\delete e}(B_{i'}) = k-1$. But then $\lambda_{N^+\delete e \contract s}(B_{i'}) = k-2$, contradicting our choice of $e$. Hence $e$ is deletable with respect to $(Q,R)$.
            
            The proof that $\kappa_{M\delete e}(S,T) = l$ is the same as before and we omit it.
        \end{subproof}
        
        \paragraph{Case II: $\localconn_{N_u}(S_1,T_1) = \colocalconn_{N_u}(S_1,T_1) = 0$.} By dualizing if necessary, we may assume there is an element $e\in \closure_{N_u}(A)\cap \closure_{N_u}(B) \cap F$, i.e. an element that is deletable with respect to $(Q,R)$ in $M$. We assume $e\in A$ (replacing $(A,B)$ by $(A\cup e, B\smin e)$ otherwise).
        
        \begin{claim}
            $e\in\closure_{N_u}(S_1\cup T_1)$. 
        \end{claim}
        
        \begin{subproof}
            First we show that $\coclosure_{N_u}(B)\smin (S_1\cup T_1)$ spans $S_1\cup T_1$. Suppose not, and let $X := (S_1 \cup T_1) \smin \coclosure_{N_u}(B)$. By construction of $N_u$, all remaining elements are in $\closure_{N_u}(B)$, so we have that $N_u \delete X$ has lower rank than $N_u$. Hence $X$ contains a cocircuit. But this contradicts the fact that $S_1$ and $T_1$ are coskew.
            
            Now pick $B' := \coclosure_{N_u}(B)\smin (S_1\cup T_1 \cup e)$ and $A' := A\smin B'$. Then $k' := \lambda_{N_u}(A') \leq k$. But since $S_1\cup T_1 \cup e \subseteq A'$ and $S_1\cup T_1\cup e \subseteq \closure_{N_u}(B')$, we must have that $\rank_{N_u}(S_1 \cup T_1 \cup e) \leq k' \leq k \leq l$. Note that $|S_1\cup T_1| \geq l$ and, since $S_1$ and $T_1$ are skew, $\rank_{N_u}(S_1\cup T_1) \geq l$. It follows that $k' = k = l$, and therefore $e\in \closure_{N_u}(S_1\cup T_1)$ as desired.
        \end{subproof}
        Similar to before, we define $Q' := A\cap E(N_u)\smin \{e\}$. Let $(A_1, \ldots, A_{t'})$ be the nested sequence of $Q'-R$ separating sets in $N_u$ from Theorem \ref{thm:mengerfragile} (applied to $Q', R$, and $F_2$), let $(B_1, \ldots, B_{t'})$ be their complements, and let $(f_1, \ldots, f_{t'})$ be the corresponding ordering of $F_2$. Let $j := c(k-1, l)$. Again we distinguish two cases.

        \paragraph{Case IIa: $e\not\in\closure_{N_u}(B_j)$.} Let $N_{v}$ be obtained from $N_u$ by contracting $e$ and removing the other elements from $A_j$ according to the same rules used to obtain $N_u$. We can then apply the arguments of Case I to $N_v$ (with $A_j$ replacing $A$ and $B_j$ replacing $B$), observing that $|F\cap B_j| \geq 2(c(k-1,l-1)+ c(k,l-1))$.

        \paragraph{Case IIb: $e \in \closure_{N_u}(B_j)$.} Let $N := N_u\contract e$, and define $R' := B_j$. By induction we find an element $f \in \{f_1, \ldots, f_{j}\}$ such that either $\kappa_{N\contract f}(Q',R') = k-1$ and $\kappa_{N\contract f}(S,T) = l$, or $\kappa_{N\delete f}(Q',R') = k-1$ and $\kappa_{N\delete f}(S,T) = l$.  As before, in the former case we have 
        $\kappa_{M\contract f}(Q,R) = k$ and $\kappa_{M\contract f}(S,T) = l$ and in the latter case we have $\kappa_{M\delete f}(Q,R) = k$ and $\kappa_{M\delete
         f}(S,T) = l$.
     This completes the proof of the theorem.
\end{proof}


\paragraph{Acknowledgements}
We thank Jim Geelen for suggesting the problem to us, and for suggesting the proof approach of Theorem \ref{thm:mainres2}. We thank Bert Gerards for his support and several valuable insights.

\bibliography{matbib2012}
\bibliographystyle{plain}

\end{document}